\newcommand{\fes}{S^{{\bf p}}_{\mathcal{T}}}
\newcommand{\coveringmesh}{{\mathcal T}_h^{\sharp}}
\newcommand{\ndg}[1]{| \kern -.25mm \|{#1}| \kern -.25mm \|}
\newcommand{\nsdg}[1]{| \kern -.25mm \|{#1}| \kern -.25mm \|_s}
\newcommand{\su}{\sum_{\k \in\mathcal{T}}}
\newcommand{\ltwo}[2]{\|{#1}\|_{#2}}
\newcommand{\el}{ \kappa \in \mathcal{T} }
\newcommand{\ud}{\mathrm{d}}
\newcommand{\norm}[2]{\|#1\|_{#2}}
\newcommand{\dint}{\text{\rm int}}
\newcommand{\mbf}[1]{\mbox{\boldmath$\rm{#1}$}}
\newcommand{\mean}[1]{ \{#1\} }
\newcommand{\jump}[1]{  [#1]  }
\newcommand{\ip}{\hat{\Pi}} % Projection in the inconsistent formulation
\newcommand{\mesh}{{\mathcal T}}
\newcommand{\ddd}{{\rm D}}
\newcommand{\dn}{{\rm N}}
\DeclareMathOperator{\diam}{diam}
\DeclareMathOperator{\dist}{dist}
\newcommand{\uu}[1]{\mathbf{#1}}
\renewcommand{\k}{\kappa}
\renewcommand{\tilde}[1]{\widetilde{#1}}
\renewcommand{\hat}[1]{\widehat{#1}}
\newcommand*{\rom}[1]{\text{\expandafter\@slowromancap\romannumeral #1@}}
\newtheorem{corollary}{Corollary}[section]
\newtheorem{lemma}[corollary]{Lemma}
\newtheorem{theorem}[corollary]{Theorem}
\newtheorem{definition}[corollary]{Definition}
\newtheorem{remark}[corollary]{Remark}
\newtheorem{assumption}[corollary]{Assumption}
\newcommand{\qed}{ \vspace{-0.5cm} \hfill $\Box$ }
\newenvironment{proof}[1][Proof.]{\begin{trivlist}
\item[\hskip \labelsep {\bfseries #1}]}{\end{trivlist}\qed}
\begin{document}
\title{Discontinuous Galerkin Methods for the Biharmonic Problem \\ on Polygonal and Polyhedral Meshes}
\author{
	 Zhaonan Dong\thanks{
		Department of Mathematics,
		University of Leicester,
		University Road,
		Leicester LE1 7RH,
		UK
		{\tt{zd14@le.ac.uk}}.
	}
}
\date{\today}

\maketitle

\begin{abstract}
\noindent

We introduce an $hp$-version symmetric interior penalty discontinuous Galerkin finite element method (DGFEM) for the numerical approximation of the biharmonic equation on general computational meshes consisting of polygonal/polyhedral (polytopic) elements. In particular, the stability and $hp$-version a-priori error bound are derived based on the specific choice of the interior penalty parameters which allows for edges/faces degeneration.  Furthermore, by deriving a new inverse inequality for a special class {of} polynomial functions (harmonic polynomials), the proposed DGFEM is proven to be stable  to incorporate very general polygonal/polyhedral  elements with an   \emph{arbitrary} number of faces for polynomial basis with degree $p=2,3$.   The key feature of the proposed method is that it employs  elemental polynomial bases of total degree $\mathcal{P}_p$, defined in the physical coordinate system, without requiring the mapping from a given reference or canonical frame. A series of numerical experiments are presented to demonstrate the performance of the proposed DGFEM on general polygonal/polyhedral meshes. 
\end{abstract}

\section{Introduction}

Fourth-order boundary-value problems have been widely used in  mathematical models from different disciplines, see \cite{gazzola2010polyharmonic}. The classical conforming finite element methods (FEMs) for the numerical solution of the biharmonic equation require that the approximate solution lie in a finite-dimensional subspace of the Sobolev space $H^2(\Omega)$. In particular, this necessitates the use of $C^1$ finite elements, such as  Argyris elements. In general, the implementation of $C^1$ elements is far from trivial.  To relax the $C^1$ continuity requirements across the element interfaces, nonconforming FEMs have been  commonly used by engineers and also analysed  by mathematicians; we refer to the monograph   \cite{ciarlet} for the details of above mentioned FEMs. For a more recent approach, we mention the $C^0$ interior penalty methods, see  \cite{MR1915664,MR2142191} for details. Another approach to avoid using $C^1$ elements is to use the mixed finite element methods, we refer to the monograph \cite{MR3097958} and the reference therein.

In the last two decades, discontinuous Galerkin FEMs (DGFEMs) have been  considerably developed  as flexible and efficient discretizations for a large class of problems ranging from computational fluid dynamics to computational mechanics and electromagnetic theory. In the pioneer work \cite{baker}, DGFEMs were first introduced as a special class of nonconforming FEMs to solve the biharmonic equation. For the overview of the historical development of  DGFEMs, we refer to the important paper \cite{unified} and monographs \cite{MR1842161,DiPietroErn} and all the reference therein.   DGFEMs are attractive as they  employ the discontinuous finite element spaces, giving great flexibility in the design of meshes and polynomial bases, providing  general framework for $hp$-adaptivity.   For the biharmonic problem, $hp$-version interior penalty (IP) DGFEMs were introduced   in \cite{MR2048235,MR2295480,MR2298696}. The stability of different IP-DGFEMs  and a priori error analysis in various norms have been  studied in those work. Additionally,  the exponential convergence for the $p$-version IP-DGFEMs were proven in \cite{MR2520159}. The a posterior error analysis of the symmetric IP-DGFEM has been done in \cite{MR2755946}. In \cite{MR2142199,MR3742892}, the domain decomposition preconditioners have been designed for IP-DGFEMs.  

More recently, DGFEMs on meshes consisting of general polygons in two dimensions or general polyhedra in three dimensions, henceforth termed collectively as \emph{polytopic}, have been proposed \cite{DGcomposite,DGpoly1,DGpoly2,DGpolyparabolic,MR2846986,MR3585793}.   The key interest of employing  polytopic meshes is predominant by the potential reduction in the total numerical degrees of freedom required for the numerical solution of PDE problems,  which is particularly important  in designing the  adaptive computations  for PDE problems on domains with micro-structures. 
Hence, polytopic meshes can naturally be combined with DGFEMs due to their element-wise discontinuous approximation.   In our works \cite{DGpoly1,DGpoly2},  an $hp$-version symmetric IP-DGFEM was introduced for the linear elliptic problem and the general advection-diffusion-reaction problem on meshes consisting of $d$-dimensional polytopic elements were analysed. The key aspect of  the method is that  the DGFEM is stable on general polytopic elements in the presence of  degenerating $(d-k)$-dimensional element facets, $k=1,\dots,d-1$, where $d$ denotes the spatial dimension. The main mesh assumption for the polytopic elements is that all the elements have a uniformly bounded number of $(d-1)$-dimensional faces, without imposing any assumptions on the measure of faces. (Assumption \ref{sec4:assumption_no_element_faces} in this work) In our work \cite{DGpolyparabolic}, we proved that  the IP-DGFEM  is stable for second order elliptic problem  on polytopic elements with arbitrary number of $(d-1)$-dimensional faces, without imposing any assumptions on the measure of faces.  The mesh assumption for the polytopic elements is that all the elements should satisfy a shape-regular condition,  without imposing any assumptions on the measure of faces or number of faces (Assumption \ref{sec5:new_assumption_no_element_faces} in this work). For details of  DGFEMs on polytopic elements, we refer to the monograph \cite{DGpolybook}. 

 To support such  general element shapes, without destroying  the local approximation properties of the DGFEM developed in \cite{DGpoly1,DGpoly2,DGpolyparabolic}, polynomial spaces defined in the physical frame, rather than mapped polynomials from a reference element, are typically employed. %Here, we mention an important fact that  the DGFEM employing the physical frame $\mathcal{P}_p$-type basis, i.e., local polynomial spaces of total degree $p$, for the case when the underlying polytopes are simply quadrilateral/hexahedral, compared to the standard approach of employing a mapped tensor-product $\mathcal{Q}_p$ polynomial basis, i.e., tensor-product polynomials of degree $p$ in each spatial variable. 
 It has been demonstrated numerically that the DGFEM employing $\mathcal{P}_p$-type basis achieves a faster rate of convergence, with respect to the number of degrees of freedom present in the underlying finite element space, as the polynomial degree $p$ increases, for a given fixed mesh, than the respective DGFEM employing a (mapped) $\mathcal{Q}_p$  basis on tensor-product elements; we refer  \cite{Peter_phd} for more numerical examples.  The proof of the above numerical observations is given in  \cite{D17}.

In this work,  we will extend the results in \cite{DGpoly1,DGpoly2,DGpolyparabolic} to cover $hp$-version IP-DGFEMs for biharmonic PDE problems. We will prove the stability and derive the a priori error bound for the $hp$-version IP-DGFEM on general polytopic elements with possibly degenerating $(d-k)$-dimensional facets,  under two different  mesh assumptions. (Assumption \ref{sec4:assumption_no_element_faces} and \ref{sec5:new_assumption_no_element_faces}). The key technical difficulty is that the $H^1$-seminorm to $L_2$-norm inverse inequality for general polynomial functions defined on polytopic elements with arbitrary number of faces  is empty in the literature. To address this issue, we prove a new inverse inequality for \emph{harmonic polynomial functions}  on polytopic elements satisfying Assumption \ref{sec5:new_assumption_no_element_faces}. With the help of the new inverse inequality,  we  prove the stability and derive the a priori error bound for the proposed DGFEM employing  $\mathcal{P}_p$ basis, $p=2,3$, under the Assumption \ref{sec5:new_assumption_no_element_faces}.  Here, we mention that there already exist different polygonal discretization methods for biharmonic problems \cite{MR3002804,MR3529253,MR3741104,MR3190348}.  To the best of {the} author's understanding, the proposed DGFEM for biharmonic problem is the first polygonal discretization scheme whose stability and approximation are independent of the relative size of  elemental faces compared to the element diameter,  and even independent of the number of elemental faces, for $\mathcal{P}_p$ basis with  $p=2,3$.  {We point out that the  $\mathcal{P}_p$ basis with  $p=2,3$  satisfies the condition that the Laplacian of any function is a harmonic  polynomials.}

The remainder of this work is structured as follows. In Section \ref{Problem}, we introduce the model problem and define the finite element space. In Section \ref{sec:IPDG}, the $hp$-version symmetric interior penalty  discontinuous Galerkin finite element method is introduced. In Section \ref{bounded_faces},  we present the stability analysis and a priori error analysis for the proposed DGFEM over polytopic meshes with bounded number of element faces. In Section \ref{unbounded_faces}, we will derive the new inverse inequality for polytopic meshes with arbitrary number of elemental faces satisfying Assumption \ref{sec5:new_assumption_no_element_faces}. Then, we present the stability analysis and error analysis. A series of numerical examples are presented in Section \ref{numerical example}. Finally, we make {concluding} remarks in Section  \ref{conclusion}.

\section{Problem and Method}\label{Problem}

For a Lipschitz domain $\omega \subset {\mathbb R}^d$, $d = 2,3$, 
we denote by $H^s(\omega)$ the Hilbertian Sobolev space of  index $s\ge 0$ of real--valued functions defined on
$\omega$, endowed with  seminorm $|\cdot |_{H^s(\omega)}$ and norm $\|\cdot\|_{H^s(\omega)}$. Furthermore, we let~$L_p(\omega)$, $p\in[1,\infty]$, 
be the standard Lebesgue space on $\omega$, equipped with the norm~$\|\cdot\|_{L_p(\omega)}$. Finally,  $|\omega|$ denotes the $d$--dimensional Hausdorff measure of $\omega$.

\subsection{Model problem}\label{model}

Let $\Omega$ be a bounded open polyhedral domain in $\mathbb{R}^d$, $d=2,3$. We consider the biharmonic equation
\begin{equation}\label{pde}
\Delta^2  u =f \quad\text{in } \Omega,
\end{equation}
where $f\in L_2(\Omega)$. We impose Dirichlet boundary conditions
\begin{equation}\label{bcs}
\begin{aligned}
u =&\ g_{\ddd}^{},\qquad \text{on } \partial \Omega,\\
\nabla u\cdot \mbf{n} = &\ g_{\dn}^{}, \qquad \text{on }  \partial \Omega,
\end{aligned}
\end{equation}
where $\mbf{n}$ denotes the unit outward normal vector to $\partial \Omega$.  It is well-known that by choosing $g_{\ddd}^{} \in H^{3/2}(\partial \Omega)$, $g_{\dn}^{} \in H^{1/2}(\partial \Omega)$, {the problem \eqref{pde} is well-posed with $u\in H^2(\Omega)$} (see \cite[page 15]{MR851383}).

\subsection{Finite element spaces}

We shall adapt the setting of meshes from \cite{DGpolybook}.  Let $\mesh$ be a 
subdivision of the computational domain $\Omega$ into 
disjoint open polygonal $(d=2)$ or polyhedral $(d=3)$ elements $\k$ such that $\bar{\Omega}=\cup_{\el}\bar{\k}$    and denote by  $h_{\k}$ the diameter of  $\el$; i.e., $h_{\k}:=\diam(\k)$. 
In the absence of hanging nodes/edges, we define the {\em interfaces} of the mesh $\mesh$ to be the set of 
$(d-1)$--dimensional facets of the elements $\k \in \mesh$. To facilitate the presence of hanging nodes/edges, which are permitted in $\mesh$,
the interfaces of $\mesh$ are defined to be the intersection of the $(d-1)$--dimensional facets of neighbouring elements. {In the case when
$d=2$, the interfaces of $ \mesh$ are simply piecewise linear segments ($(d-1)$--dimensional simplices). However, in general for $d=3$, the interfaces of $\mesh$ consist of general polygonal surfaces in $\mathbb{R}^3$. Thereby, we assume that each planar section of each interface of an element $\k \in \mesh$ may be subdivided into a set of co-planar triangles ($(d-1)$--dimensional
simplices). }

%the interfaces of a given element $\k\in \mesh$ will always consist of line segments ($(d-1)$--dimensional simplice).
%For $d=3$, we assume that each interface of an element
%$\k\in \mesh$ may be subdivided into a set of co-planar triangles. With this in mind we use the terminology `face' to refer to a $(d-1)$--dimensional
%simplex (line segment or triangle for $d=2 \text{ or }3$, respectively), which forms part of the boundary (interface) of an element $\k\in \mesh$. For $d=2$, 
%the face and interface of an element $\k\in \mesh$ necessarily 
%coincide with each other, while in three--dimensions this may no longer be the case,  since the boundary of a general polyhedron may consist of planar polygons which are not triangular.

As in \cite{DGpoly1,DGpoly2}, we assume that a sub-triangulation into faces of each mesh interface is given if $d=3$, and denote by $\mathcal{E}$ the union of all open mesh interfaces if $d=2$ and the union of all open triangles belonging to the sub-triangulation of all mesh interfaces if $d=3$. In this way, $\mathcal{E}$ is always defined as a set of $(d-1)$--dimensional simplices. 
 Further, 
we write $\mathcal{E}_{\dint}$ and $\mathcal{E}_{\ddd}$ to denote the union of all open $(d-1)$--dimensional element faces $F\subset \mathcal{E}$ that are contained in $\Omega$ and in $\partial\Omega$, respectively. Let $\Gamma_{\dint}:=\{\uu{x} \in \Omega:\uu{x} \in F , F\in \mathcal{E}_{\dint} \}$ and let $\Gamma_{\ddd}:=\{\uu{x}\in \partial \Omega:\uu{x} \in F , F\in \mathcal{E}_{\ddd} \}$, while $\Gamma: = \Gamma_{\dint} \cup \Gamma_{\ddd}$.  %Further assumptions on the class of admissible meshes will be outlined later on in Section~\ref{inverse}. (\peter{has not been defined}) 

Given $\el$, we write $p_{\k}$ to denote the (positive) \emph{polynomial degree} of the element $\k$, and collect the $p_{\k}$ in
the vector ${\bf p}:=(p_{\k}:\el)$. We then define the \emph{finite element space} $\fes$ with
respect to $\mesh$ and ${\bf p}$ by
\[
\fes:=\{u\in L_2(\Omega)
:u|_{\k}\in\mathcal{P}_{p_{\k}}(\k),\el\},
\]
where
$\mathcal{P}_{p_\k}(\k)$ denotes the space of polynomials of total degree $p_\k$ on $\k$, satisfying $p_\k \geq 2$ for all $\k \in \fes$. As in \cite{DGpoly1}, we point out that the local elemental polynomial spaces employed within the definition
of $\fes$ are defined in the physical coordinate system, without the need to map from a given reference or canonical frame. 
Finally, we define the  broken Sobolev space $H^\bold{s}(\Omega,\mesh)$  with respect to the subdivision  $\mesh$ up to composite order $\bold{s}$ as follows
\begin{equation}\label{brokenSob}
H^\bold{s} (\Omega,\mesh)=\{ u\in L_2(\Omega):u|_\k \in H^{s_\k}(\k) \quad \forall \k \in \mesh \},
\end{equation}
%For $u\in  (\Omega,\mesh)$, we define the broken gradient $\nabla_h u$  by $(\nabla_h u)|_\k=\nabla(u|_\k), \k\in \mesh$, which will be used to construct the forthcoming DGFEM. 
which will be used to construct the forthcoming DGFEM.

\subsection{Trace operators}
For any element $\k\in \mesh$, we denote by $\partial \k$ the union of $(d-1)$-dimensional open faces of $\k$. Let $\k_i$ and $\k_j$ be two 
adjacent elements of ${\cal T}$ and let $\uu{x}$ be an arbitrary point on the interior face $F \subset \Gamma_{\dint}$ given
by $F = \partial \k_i \cap \partial \k_j$. We write $\mbf{n}_{i}$ and $\mbf{n}_{j}$  to denote the 
outward unit normal vectors on $F$, relative to $\partial\k_i$ and $\partial\k_j$, respectively. 
Furthermore, let $v$ and $\mbf{q}$  be  scalar- and vector-valued functions,  which are  smooth inside
each element~$\k_i$ and $\k_j$. By $(v_i,\mbf{q}_i)$ and $(v_j,\mbf{q}_j)$, we denote the traces of
$(v,\mbf{q})$ on $F$ taken from within the interior of $\k_i$ and $\k_j$,
respectively.   The averages of $v$ and $\mbf{q}$  at $\uu{x}\in
F$ are given by
$$
\mean{v}:=\frac{1}{2}(v_i+v_j), \quad \mean{\mbf{q}}:=\frac{1}{2}(\mbf{q}_i+\mbf{q}_j),$$
respectively. Similarly, the jump of $v$ and $\mbf{q}$ at
$\uu{x}\in F \subset \Gamma_{\text{int}}$ are given by
$$
\jump{v} :=v_i\,{\mbf{n}}_{i}+v_j\,{\mbf{n}}_{j}, \quad \jump{\mbf{q}} := \mbf{q}_i\cdot {\mbf{n}}_{i} +\mbf{q}_j\cdot {\mbf{n}}_{j},
$$
respectively.
On a boundary face $F\subset  \Gamma_{\ddd}$, such that $F \subset \partial \k_i$, $\k_i\in {\cal T}$,
we set 
$$
\mean{v}=v_i, \quad    \mean{\mbf{q}}=\mbf{q}_i,  \quad \jump{v}=v_i {\mbf{n}}_{i}  \quad \jump{\mbf{q}}=\mbf{q}_i \cdot {\mbf{n}}_{i},
$$ 
with~$\mbf{n}_{i}$ denoting the unit outward normal
vector on the boundary $\Gamma_{\ddd}$.

\section{Interior Penalty Discontinuous Galerkin Method} \label{sec:IPDG}

With help of the above notation, we can now introduce the DGFEM for the problem \eqref{pde}, \eqref{bcs}: Find $u_h\in\fes$ such that 
\begin{equation}\label{galerkin_dg}
B(u_h,v_h)=\ell(v_h) \quad \text{for all } v_h \in \fes, 
\end{equation}
where the bilinear form $B(\cdot, \cdot ):\fes\times \fes\to \mathbb{R}$ is defined {by}
\begin{equation} \label{bilinear-form}
\begin{aligned}
B(u,v) := &  \su \int_\k  \Delta u \Delta v \ud \uu{x} 
+\int_\Gamma \Big(  \mean{\nabla  \Delta u} \cdot \jump{v}
+ \mean{\nabla \Delta v} \cdot \jump{u}\Big)  \ud{s}  \\ 
-&  \int_\Gamma \Big( 
  \mean{\Delta u}  \jump{\nabla v}
+ \mean{\Delta v}  \jump{\nabla u} \Big) \ud{s}
+\int_\Gamma \Big( \sigma \jump{u} \cdot \jump{v}
+\tau \jump{\nabla u} \jump{\nabla v} \Big) \ud{s}.\end{aligned}
\end{equation}
Furthermore,  the linear functional $\ell:\fes\to\mathbb{R}$ is defined by
\begin{equation} \label{linear-form}
\begin{aligned}
\ell(v)
&:=& \su\int_{\k} f  v \ud \uu{x}
+ \int_{\Gamma_{\ddd}} g_{\ddd} \Big( \nabla \Delta v \cdot \mbf{n} + \sigma v \Big) 
+ g_{\dn}  \Big(  \tau \nabla v \cdot \mbf{n} -\Delta v \Big)\ud s. 
\end{aligned}
\end{equation}
The well-posedness and stability properties of the above method depend on the choice of the discontinuity-penalization functions $\sigma \in L_\infty(\Gamma)$ and $\tau \in L_\infty(\Gamma)$  appearing in~\eqref{bilinear-form} and~\eqref{linear-form} .  The precise definition will be given in next sections based on employing different mesh assumptions on the elements present in the computational mesh $\mesh$. 

\begin{remark}
The DGFEM formulation introduced in this work coincides with the SIP-DGFEM defined in \cite{MR2298696,MR2295480,MR2520159,MR2755946,MR3742892}, which contains {the  inner product of the Laplacian of functions. We note that for the alternative formulation of the biharmonic problem based on Frobenius} product of the Hessians of functions in the literature, see \cite{MR2670114,MR2142191}, the forthcoming analysis and results in this work are also valid.
\end{remark}

%The nonnegative function  $\sigma \in L_\infty(\Gamma)$ appearing in~\eqref{diffusion_bilinear} and~\eqref{adv-dg-linear} is referred to as the \emph{discontinuity-penalization parameter}; its precise definition, which depends on the diffusion tensor  $a$ and the discretization parameters,  will be given in  Lemma~\ref{coercivity}. 

\section{Error Analysis I: Bounded Number of Element Faces} \label{bounded_faces}
In this section, we study the stability and a priori error analysis of the DGFEM \eqref{galerkin_dg} under the following mesh assumption, which guarantees that the number of faces each element possesses remains bounded under mesh refinement. 

\begin{assumption}[Limited number of faces]  \label{sec4:assumption_no_element_faces} 
For each element $\el$, we define
$$
C_\k =  \mbox{card} 
      \Big\{ F \in \Gamma : 
              F \subset \partial \k \Big\} . 
$$
We assume there exists a positive constant $C_F$, 
independent of the mesh parameters, such that
$$
\max_{\el} C_\k 
\leq C_F. 
$$
%uniformly with respect to the mesh size. 
\end{assumption}

\subsection{Inverse estimates} \label{sec4:inverse}
In this section, we will revisit the  inverse inequalities in  the context of general polytopic elements from \cite{DGpoly1,DGpoly2} without proof. The detail of proof can be found in Chapter 3 of \cite{DGpolybook}.
To this end, we introduce the following set of definitions and mesh assumptions.

\begin{definition}\label{sec4:element_face_simplices}
For each element $\k$ in the computational mesh $\mesh$, we define the 
family $\mathcal{F}_{\flat}^{\k}$ of all possible $d$--dimensional simplices 
contained in $\k$ and having at least one face in common with $\k$.  
Moreover, we write $\k_{\flat}^F$ to denote a simplex belonging to  
$\mathcal{F}_{\flat}^{\k}$ which shares with $\el$ the specific face 
$F\subset\partial\k$.
\end{definition}
%Next, we recall the following key definition. 

\begin{definition}\label{sec4:def_poly_assumption}
An element $\k\in\mesh$ is said to be {\em $p$-coverable} with respect to $p\in\mathbb{N}$, if there exists a set of $m_{\k}$ overlapping shape-regular simplices $K_i$, $i=1,\dots, m_{\k}$, $m_{\k}\in\mathbb{N}$,
such that 
\begin{eqnarray}
\dist(\k, \partial K_i) <  C_{as}\frac{\diam(K_i)}{p^{2}},
\qquad
\mbox{and} 
\qquad
|K_i|\ge c_{as} |\k| \label{sec4:T_tilde_condition}
\end{eqnarray}
for all $i=1,\dots, m_{\k}$, where $C_{as}$ and $c_{as}$ are
positive constants, independent of $\k$ and $\mesh$.
\end{definition}

Equipped with Definition~\ref{sec4:def_poly_assumption}, we are now 
in a position to present the following $hp$--version trace inverse inequality for 
general polytopic elements which directly accounts for elemental facet degeneration.

\begin{lemma}\label{sec4:lemmal_inv}
Let $\el$, $F\subset \partial \k$ denote one of its faces.
%and $\tilde{\mesh}$ be defined as in Definition \ref{ch3:def_poly_assumption}. 
Then, for each $v\in\mathcal{P}_p(\k)$, the following inverse inequality holds
\begin{equation}\label{sec4:inv_est_gen}
\norm{v}{L_2(F)}^2 \le C_{\rm INV}(p,\k,F) %C_{\rm INV}^{p,\k,F}
p^2 \frac{|F|}{|\k|}\norm{v}{L_2(\k)}^2,
\end{equation}
where
\begin{eqnarray}
%C_{\rm INV}^{p,\k,F}
C_{\rm INV}(p,\k,F)
: = \left\{
\begin{array}{ll}
\displaystyle{C_{{\rm inv},1} \min \Big\{\frac{|\k|}{\sup_{\k_{\flat}^F\subset \k}
|\k_{\flat}^F|},p^{2(d-1)} \Big\}}, ~~ & \text{if $\k$ is $p$-coverable} %\k\in\tilde{\mesh}, 
\\ 
\displaystyle{C_{{\rm inv},1} \frac{|\k|}{\sup_{\k_{\flat}^F\subset \k}
|\k_{\flat}^F|}}, &  \text{otherwise,} %\el\backslash\tilde{\mesh},
\end{array}  
\right. \label{sec4:inverse_constant}
\end{eqnarray}
and with $\k_{\flat}^F\in \mathcal{F}_{\flat}^{\k}$ as in Definition~\ref{sec4:element_face_simplices}.
Furthermore, $C_{{\rm inv},1}$ are positive constants which are
independent of $|\k|/\sup_{\k_{\flat}^F\subset \k}
|\k_{\flat}^F|$, $|F|$, $p$, and $v$. 
\end{lemma}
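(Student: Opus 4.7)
The plan is to establish both branches of $C_{\rm INV}$ separately and then, in the $p$-coverable case, take the minimum of the two bounds. The unifying idea is to reduce the inequality on the polytope $\kappa$ to a classical polynomial trace/inverse estimate on a shape-regular simplex where sharp $hp$-scalings are available, and then transfer the resulting bound back to $\kappa$ using either a trivial inclusion or, in the $p$-coverable case, a Markov-type extension argument.

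The first branch (valid universally and thus covering the ``otherwise'' case) is obtained by picking the face-simplex $\kappa_\flat^F \in \mathcal{F}_\flat^\kappa$ sharing $F$ with $\kappa$, as in Definition~\ref{sec4:element_face_simplices}. Since $\kappa_\flat^F$ is a $d$-simplex, the standard $hp$-trace inverse inequality on reference simplices (pulled back via an affine map, noting that its Jacobian scales like $|\kappa_\flat^F|/|\hat\kappa|$ while the face Jacobian scales like $|F|/|\hat F|$) gives
\[
\|v\|_{L_2(F)}^2 \;\le\; C\,p^2\,\frac{|F|}{|\kappa_\flat^F|}\,\|v\|_{L_2(\kappa_\flat^F)}^2 .
\]
Since $\kappa_\flat^F \subset \kappa$, we may replace $\|v\|_{L_2(\kappa_\flat^F)}$ by $\|v\|_{L_2(\kappa)}$, and after optimizing over $\kappa_\flat^F \in \mathcal{F}_\flat^\kappa$ (i.e.\ taking the sup of $|\kappa_\flat^F|$) we obtain the ``otherwise'' bound with constant $C_{{\rm inv},1}\,|\kappa|/\sup_{\kappa_\flat^F\subset\kappa}|\kappa_\flat^F|$.

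For the second branch of the $p$-coverable case, the point is to avoid the factor $|\kappa|/|\kappa_\flat^F|$, which can blow up when the simplex $\kappa_\flat^F$ is very thin compared to $\kappa$. Here I would argue pointwise: bound $\|v\|_{L_2(F)}^2 \le |F|\,\|v\|_{L_\infty(\kappa)}^2$ and then control the $L_\infty$ norm on $\kappa$ via the covering simplices $K_i$ of Definition~\ref{sec4:def_poly_assumption}. Because $\mathrm{dist}(\kappa,\partial K_i) < C_{as}\,\diam(K_i)/p^2$, a classical Markov/Bernstein inequality for polynomials of degree $p$ extended to a $O(\diam(K_i)/p^2)$-neighbourhood yields
\[
\|v\|_{L_\infty(\kappa)} \;\le\; C\,\|v\|_{L_\infty(K_i)} ,
\]
and then the standard simplex $L_\infty$--$L_2$ inverse inequality gives $\|v\|_{L_\infty(K_i)}^2 \le C\,p^{2(d-1)}\,|K_i|^{-1}\|v\|_{L_2(K_i)}^2$. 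Combining with the volume comparability $|K_i| \ge c_{as}|\kappa|$ and $K_i\cap\kappa\subset\kappa$ produces $\|v\|_{L_\infty(\kappa)}^2 \le C\,p^{2(d-1)}\,|\kappa|^{-1}\|v\|_{L_2(\kappa)}^2$, and hence
\[
\|v\|_{L_2(F)}^2 \;\le\; C\,p^{2(d-1)}\,\frac{|F|}{|\kappa|}\,\|v\|_{L_2(\kappa)}^2 \;=\; C\,p^{2(d-1)}\cdot p^2\,\frac{|F|}{|\kappa|}\,\|v\|_{L_2(\kappa)}^2\cdot p^{-2},
\]
matching the second entry in the minimum (absorbing the $p^{-2}$ into the constant or, more carefully, noting that one $p^2$ factor is not needed in the pointwise route, so the formula reads correctly with the leading $p^2$ pulled out). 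Taking the minimum of the two estimates in the $p$-coverable case delivers the stated $C_{\rm INV}$.

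The principal obstacle is the Markov-extension step: one needs a sharp polynomial growth bound on a neighbourhood of size $\diam(K_i)/p^2$, since any cruder distance would destroy the $p^{2(d-1)}$ scaling and turn the estimate into something worse than the trivial volume-ratio bound. This is precisely why Definition~\ref{sec4:def_poly_assumption} sets the threshold at $C_{as}\diam(K_i)/p^2$; I would invoke the one-dimensional Markov inequality on line segments joining a point $x\in\kappa$ to its nearest point in $K_i$, iterating over coordinate directions and using shape-regularity of $K_i$ to keep constants dimensionally uniform. The remaining steps (affine scaling of the simplex trace inequality, replacement $\kappa_\flat^F\subset\kappa$, and optimization over the family $\mathcal{F}_\flat^\kappa$) are routine.
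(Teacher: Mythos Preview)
The paper itself does not prove this lemma; it is quoted from \cite{DGpoly1,DGpoly2} with the full argument deferred to Chapter~3 of \cite{DGpolybook}. Your two-branch outline---the face-simplex $\kappa_\flat^F$ combined with the sharp simplex trace estimate for the universal bound, and the $L_\infty$ route via the covering simplices $K_i$ together with a Markov-type extension for the $p$-coverable branch---is precisely the argument used in that reference.

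Two points in the second branch deserve tightening. First, the $L_\infty$--$L_2$ inverse estimate on a shape-regular $d$-simplex reads $\|v\|_{L_\infty(K_i)}^2 \le C\,p^{2d}\,|K_i|^{-1}\|v\|_{L_2(K_i)}^2$, with exponent $2d$ rather than $2(d-1)$; with the correct power the total becomes $p^{2d}=p^{2(d-1)}\cdot p^2$, which matches the stated $C_{\rm INV}$ directly and removes the need for your parenthetical fudge. Second, the replacement of $\|v\|_{L_2(K_i)}$ by $\|v\|_{L_2(\kappa)}$ is not justified by the tautology ``$K_i\cap\kappa\subset\kappa$''. In the construction of \cite{DGpolybook} the covering simplices are arranged so that either $K_i\subset\kappa$ (making this step trivial, with Markov used only for the $L_\infty$ extension, as you describe) or $K_i$ protrudes from $\kappa$ only within a strip of width $O(\diam(K_i)/p^2)$, in which case the Markov argument is applied in $L_2$ to pass from $K_i$ to $K_i\cap\kappa$. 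Either way the distance hypothesis in Definition~\ref{sec4:def_poly_assumption} is what does the work; you should be explicit about which inclusion holds so that the argument closes without ambiguity.
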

Next, in order to present an inverse inequality which provides a bound on the
$H^1(\k)$--seminorm of a polynomial function $v$, $\el$, with respect to the 
$L_2(\k)$--norm of $v$, on the general polytopic meshes $\k \in \mesh$. It is now necessary to assume shape-regularity of the polytopic mesh $\k \in\mesh$.

\begin{assumption} \label{sec4:assumption_shape_regular_mesh}
The subdivision $\mesh$ is shape-regular, in the sense of \cite{ciarlet}, i.e., there exists a positive constant 
$C_{\rm r}$, independent of the mesh parameters, such that
\begin{equation*}
\forall \k \in\mesh, \quad  \frac{h_\k}{\rho_\k} \leq C_{\rm r}.
\end{equation*}
with $\rho_\k$ denoting the diameter of the largest ball contained in $\k$.
\end{assumption}

\begin{lemma}\label{sec4:DG-lemma inverse}
Given that Assumption~\ref{sec4:assumption_shape_regular_mesh} is
satisfied, then, for any $\k\in\mesh$ which is $p$-coverable
and $v\in\mathcal{P}_p(\k)$, the following inverse inequality holds
\begin{equation} \label{sec4:inverse_h1}
\|\nabla v\|_{L_2(\k)}^2  \le  C_{{\rm inv},2} \frac{p^4}{h_{\k}^2}\norm{v}{L_2(\k)}^2,
\end{equation}
where $C_{{\rm  inv},2}$ is a positive constant, which is independent of $v$,
$h_\k$ and $p$, 
but depends on the shape-regularity constant of the covering of $\k$.
\end{lemma}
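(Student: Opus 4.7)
The plan is to transfer the classical shape-regular simplicial $hp$-inverse inequality onto $\kappa$ via the $p$-coverable refinement from Definition \ref{sec4:def_poly_assumption}. That definition provides shape-regular simplices $K_i$, $i=1,\ldots,m_\kappa$, whose union covers $\kappa$ and which satisfy $|K_i|\ge c_{as}|\kappa|$ together with $\operatorname{dist}(\kappa,\partial K_i) < C_{as}\operatorname{diam}(K_i)/p^2$. Combining the volume condition with Assumption \ref{sec4:assumption_shape_regular_mesh} (so that $|\kappa|\sim h_\kappa^d$) and the shape-regularity of each $K_i$ yields $h_{K_i}\sim h_\kappa$ uniformly in $i$, which will set the correct scaling of the final bound.

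Two ingredients drive the proof. The first is the classical $hp$-inverse estimate on each shape-regular simplex $K_i$: for every $w\in\mathcal{P}_p(K_i)$,
$$
\|\nabla w\|_{L_2(K_i)}^2 \le C\, p^4 h_{K_i}^{-2}\|w\|_{L_2(K_i)}^2,
$$
with $C$ depending only on the shape-regularity constant of $K_i$. The second is a Markov/Chebyshev-type enlargement inequality: since the layer separating $K_i\cap\kappa$ from $K_i$ has width at most $C\operatorname{diam}(K_i)/p^2$ by \eqref{sec4:T_tilde_condition}, the $L_2$-norm of a degree-$p$ polynomial on $K_i$ is controlled by a $p$-independent multiple of its $L_2$-norm on $K_i\cap\kappa$. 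This enlargement step is the technical heart of the argument.

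With both ingredients in place, I would apply the simplicial inverse inequality to $w$ equal to the unique polynomial extension of $v\in\mathcal{P}_p(\kappa)$ to $\mathbb{R}^d$ restricted to $K_i$, and then invoke the Markov-type estimate together with $K_i\cap\kappa\subset\kappa$ to obtain
$$
\|\nabla v\|_{L_2(K_i)}^2 \le C\,\frac{p^4}{h_\kappa^2}\|v\|_{L_2(K_i)}^2 \le C\,\frac{p^4}{h_\kappa^2}\|v\|_{L_2(\kappa)}^2.
$$
Summing over $i=1,\ldots,m_\kappa$ and using $\kappa\subset\bigcup_i K_i$, together with the uniform bound on $m_\kappa$ implied by the volume condition and shape-regularity, produces the claimed estimate \eqref{sec4:inverse_h1} with $C_{{\rm inv},2}$ depending only on the shape-regularity and covering constants.

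The principal obstacle is the Markov-type enlargement step. The $p^{-2}$ distance threshold in the definition of $p$-coverability is calibrated precisely to the growth of Chebyshev polynomials on slightly enlarged intervals: any larger tolerance would inject an extra $p^2$ factor and degrade the bound to $p^6 h_\kappa^{-2}$, destroying the optimal inverse scaling. The standard route to close this gap is a one-dimensional Chebyshev bound, lifted to $d$ dimensions by integrating along a chord decomposition aligned with the extension direction and exploiting the shape-regularity of $K_i$ to convert the one-dimensional width into a uniform $(d{-}1)$-dimensional measure of transverse slices.
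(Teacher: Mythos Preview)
The paper does not supply its own proof of this lemma: Section~\ref{sec4:inverse} explicitly states that the inverse inequalities are revisited ``without proof'' and refers the reader to Chapter~3 of \cite{DGpolybook} (and \cite{DGpoly1,DGpoly2}) for the details. Your proposal reproduces precisely the argument found there: transfer to the shape-regular covering simplices $K_i$, apply the classical simplicial $hp$-inverse estimate, and use the Chebyshev/Markov enlargement bound---calibrated by the $p^{-2}$ distance condition in Definition~\ref{sec4:def_poly_assumption}---to pull the $L_2$-norm from $K_i$ back to $K_i\cap\kappa\subset\kappa$. Your identification of the enlargement step as the technical heart, and of the $p^{-2}$ threshold as the exact tolerance that prevents a spurious $p^2$ loss, is correct and matches the literature.

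One small point to tighten: your final step sums over $i=1,\dots,m_\kappa$ and invokes a uniform bound on $m_\kappa$. Definition~\ref{sec4:def_poly_assumption} as stated in the paper does not explicitly bound $m_\kappa$, and a packing argument alone does not bound the number of \emph{overlapping} simplices. In the referenced monograph this is handled either by taking $m_\kappa$ bounded as part of the definition, or by observing that the covering can always be reduced to one with $m_\kappa$ controlled by the shape-regularity constants (since each $K_i$ has volume comparable to $|\kappa|$ and diameter comparable to $h_\kappa$, a Besicovitch-type selection suffices). You should make this explicit rather than assert it follows directly from the volume condition.
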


\begin{remark}
We emphasize  that the above inverse inequalities in Lemma \ref{sec4:lemmal_inv},  \ref{sec4:DG-lemma inverse} are both sharp with respect to $(d-k)$-dimensional faces degeneration, for $k=1,\dots, d-1$. Moreover, they are both essential for proving stability of DGFEM \eqref{galerkin_dg} over general polytopic elements $\k\in \mesh$ with degenerating faces. %However, to prove the stability of DGFEM for solving Poisson problems, only Lemma \ref{sec4:lemmal_inv} is essential, see \cite{DGpoly1}. 
\end{remark}

\subsection{The stability of DGFEM} \label{sec4:stability}

For the forthcoming error analysis, we introduce an inconsistency formulation of the bilinear form \eqref{bilinear-form} and linear form  \eqref{linear-form}, without using polynomial lifting operators, cf. \cite{MR2520159}. We define, for $u,v\in \mathcal{S}:= H^2(\Omega)+\fes$,  the bilinear form 
%\begin{equation}\label{in-galerkin_dg}
%\tilde{B}(u,v)=\tilde{\ell}(v) , 
%\end{equation}
\begin{equation} \label{in-bilinear-form}
\begin{aligned}
\tilde{B}(u,v) := &  \su \int_\k  \Delta u \Delta v \ud \uu{x} 
+\int_\Gamma \Big(  \mean{\nabla \ip (\Delta u)} \cdot \jump{v}
+ \mean{\nabla \ip (\Delta v)} \cdot \jump{u}\Big)  \ud{s}  \\ 
-&  \int_\Gamma \Big( 
  \mean{\ip(\Delta u)}  \jump{\nabla v}
+ \mean{\ip (\Delta v)}  \jump{\nabla u} \Big) \ud{s}
+\int_\Gamma \Big( \sigma \jump{u} \cdot \jump{v}
+\tau \jump{\nabla u} \jump{\nabla v} \Big) \ud{s}.
\end{aligned}
\end{equation}
 and the linear functional $\tilde{\ell}:\mathcal{S}\to\mathbb{R}$ by
\begin{equation} \label{in-linear-form}
\begin{aligned}
\tilde{\ell}(v)
&:=& \su\int_{\k} f  v \ud \uu{x}
+ \int_{\Gamma_{\ddd}} g_{\ddd} \Big( \nabla \ip(\Delta v) \cdot \mbf{n} + \sigma v \Big) 
+ g_{\dn}  \Big(  \tau \nabla v \cdot \mbf{n} -\ip(\Delta v) \Big)\ud s;
\end{aligned}
\end{equation}
here, $\ip : L_2(\Omega)\rightarrow S_{\mesh}^{{\bf{p}}-2}$ denotes the $L_2$-projection onto the finite element space $S_{\mesh}^{{\bf{p}}-2}$. It is immediately clear, therefore, {that} $\tilde{B}(u_h,v_h)={B}(u_h,v_h)$ and $\tilde{\ell}(v_h)={\ell}(v_h)$ for all $u_h, v_h \in \fes$.  

Next, we introduce the DGFEM-norm $\ndg{\cdot}$: 
\begin{equation} \label{DG_norm}
\ndg{v}^2: = \sum_{\el} \ltwo{\Delta v}{L_2(\k)} ^2 +\int_{\Gamma} \Big( \sigma |\jump{v}|^2 +\tau |\jump{\nabla v}|^2 \Big)\ud{s},
\end{equation}
for all functions $v\in H^2 (\Omega,\mesh)$.
The continuity and coercivity of the inconsistent bilinear form $\tilde{B}(\cdot,\cdot)$, with respect to the norm $\ndg{\cdot}$, is established by the following lemma.

\begin{lemma} \label{sec4:lem:coercivity}
Given that Assumption~\ref{sec4:assumption_no_element_faces}, \ref{sec4:assumption_shape_regular_mesh} hold, and let 
{$\sigma: \Gamma\rightarrow \mathbb{R_+}$  and $\tau: \Gamma\rightarrow \mathbb{R_+}$  be} defined facewise: 
\begin{equation}\label{sec4:eq:sigma}
\sigma(\mbf{x}) :=\left\{
\begin{array}{ll}
C_{\sigma}{\displaystyle   \max_{\k\in\{\k_i,\k_j\}}\Big\{ (C_{\rm INV}(p_\k,\k,F)\frac{ p_\k^2|F|}{|\k|} )
(C_{{\rm inv},2} \frac{p^4_\k}{h_{\k}^2}) \Big\} }, & ~  \uu{x}\in F \in \Gamma_{\dint}, ~F\subset\partial\k_i\cap\partial\k_j, \\
C_{\sigma} (C_{\rm INV}(p_\k,\k,F)%C_{\rm INV}^{p_\k,\k,F}
~\displaystyle\frac{ p_\k^2|F|}{|\k|})(C_{{\rm inv},2} \frac{p^4_\k}{h_{\k}^2}) , & ~  \uu{x}\in F \in \Gamma_{\ddd}, ~F\subset\partial\k.%\cap \partial\Omega_{\ddd}.
\end{array}
\right. 
\end{equation}
and 
\begin{equation}\label{sec4:eq:tau}
\tau(\mbf{x}) :=\left\{
\begin{array}{ll}
C_{\tau}{\displaystyle   \max_{\k\in\{\k_i,\k_j\}}\Big\{ C_{\rm INV}(p_\k,\k,F)%C_{\rm INV}^{p_\k,\k,F}
~\frac{ p_\k^2|F|}{|\k|} \Big\} }, & ~  \uu{x}\in F \in \Gamma_{\dint}, ~F\subset\partial\k_i\cap\partial\k_j, \\
C_{\tau} C_{\rm INV}(p_\k,\k,F)%C_{\rm INV}^{p_\k,\k,F}
~\displaystyle\frac{ p_\k^2|F|}{|\k|}, & ~  \uu{x}\in F \in \Gamma_{\ddd}, ~F\subset\partial\k.%\cap \partial\Omega_{\ddd}.
\end{array}
\right.
\end{equation}
Where $C_\sigma$ and  $C_\tau$ are  sufficiently large positive
constants. Then the bilinear form $\tilde{B}(\cdot,\cdot)$ is coercive and continuous over $\mathcal{S}\times \mathcal{S}$, i.e.,
\begin{equation}\label{sec4:coer}
\tilde{B}(v,v) \geq C_{\rm {coer}} \ndg{v}^2 \quad\text{for all}\quad v\in \mathcal{S},
\end{equation}
and
\begin{equation}\label{sec4:cont}
\tilde{B}(w,v)\leq C_{\rm cont}\ndg{w}\,\ndg{v} \quad\text{for all}\quad w,v\in \mathcal{S},
\end{equation}
respectively,
where $C_{\rm coer}$ and $C_{\rm cont}$ are positive constants, independent of the local
mesh sizes $h_\k$, local polynomial degree orders ${p_\k}$, $\el$ and measure of faces.
\end{lemma}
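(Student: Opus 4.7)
The plan is to establish both inequalities by expanding $\tilde{B}(v,v)$, applying Cauchy--Schwarz and Young's inequality on the four non-symmetric boundary/interface terms, and then using the face inverse estimates of Lemmas~\ref{sec4:lemmal_inv} and \ref{sec4:DG-lemma inverse} in conjunction with the $L_2$-stability of $\ip$ to absorb the resulting norms into $\sum_\k\|\Delta v\|_{L_2(\k)}^2$. The reason the inconsistent form is used is precisely that $\ip(\Delta v)\in S_{\mesh}^{{\bf{p}}-2}$ is polynomial for every $v\in\mathcal{S}$, so the inverse estimates of Section~\ref{sec4:inverse} apply to it even when $v\in H^2(\Omega)$ is not a polynomial.

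For coercivity, I would first write
\begin{equation*}
\tilde{B}(v,v) = \sum_\k \|\Delta v\|_{L_2(\k)}^2 + 2\int_\Gamma \mean{\nabla\ip(\Delta v)}\cdot\jump{v}\,\ud s - 2\int_\Gamma \mean{\ip(\Delta v)}\jump{\nabla v}\,\ud s + \int_\Gamma\bigl(\sigma|\jump{v}|^2 + \tau|\jump{\nabla v}|^2\bigr)\,\ud s.
\end{equation*}
I would then bound the first cross term by Young's inequality with a weight $\sigma^{-1/2}$ on the gradient average and $\sigma^{1/2}$ on the jump, and the second analogously with $\tau$. The key step is to control $\int_\Gamma \sigma^{-1}|\mean{\nabla\ip(\Delta v)}|^2\,\ud s$ and $\int_\Gamma \tau^{-1}|\mean{\ip(\Delta v)}|^2\,\ud s$ face-by-face. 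On a face $F\subset\partial\k$, the trace inverse inequality \eqref{sec4:inv_est_gen} applied to $\ip(\Delta v)|_\k$ (respectively to $\nabla \ip(\Delta v)|_\k$ together with the $H^1$--$L_2$ inverse inequality \eqref{sec4:inverse_h1}) produces exactly the factor $C_{\rm INV}(p_\k,\k,F)p_\k^2|F|/|\k|$ (resp.\ that factor times $C_{{\rm inv},2}p_\k^4/h_\k^2$) that appears in the definitions \eqref{sec4:eq:tau} and \eqref{sec4:eq:sigma}. Hence the ratios $\sigma^{-1}\|\nabla\ip(\Delta v)\|_{L_2(F)}^2$ and $\tau^{-1}\|\ip(\Delta v)\|_{L_2(F)}^2$ are each bounded by $C_\sigma^{-1}$ (resp.\ $C_\tau^{-1}$) times $\|\ip(\Delta v)\|_{L_2(\k)}^2$, which the $L_2$-stability of the projection bounds by $\|\Delta v\|_{L_2(\k)}^2$. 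Summing, Assumption~\ref{sec4:assumption_no_element_faces} contributes at most the bounded factor $C_F$ from the number of faces per element. Choosing $C_\sigma$ and $C_\tau$ large enough that $C_F/C_\sigma$ and $C_F/C_\tau$ are strictly less than $1$ after the Young's-inequality balancing, the cross terms can be absorbed into the $\Delta v$ and jump terms, leaving $\tilde{B}(v,v)\ge C_{\rm coer}\ndg{v}^2$.

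For continuity, I would apply Cauchy--Schwarz face-by-face in exactly the same manner: the volume term is handled directly, and the four boundary terms split into products of a weighted trace of $\ip(\Delta\cdot)$ or $\nabla\ip(\Delta\cdot)$ against a weighted jump. The same chain of inverse estimates plus $L_2$-stability of $\ip$ bounds each weighted trace by $\|\Delta\cdot\|_{L_2(\k)}$, while the weighted jumps contribute the $\sigma$- and $\tau$-terms of $\ndg{\cdot}$ directly; multiplying the two yields $C_{\rm cont}\ndg{w}\ndg{v}$, with $C_{\rm cont}$ depending on $C_F$, $C_{{\rm inv},1}$, $C_{{\rm inv},2}$, $C_\sigma$, $C_\tau$ but not on $h_\k$, $p_\k$, or the measures of the faces.

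The main obstacle I anticipate is the bound on $\int_\Gamma\sigma^{-1}|\mean{\nabla\ip(\Delta v)}|^2\,\ud s$: it is here that \emph{both} inverse inequalities must be chained (trace inverse for $\nabla\ip(\Delta v)$ followed by $H^1$--$L_2$ inverse on the element), and it is precisely this chaining that explains why $\sigma$ must contain the extra factor $C_{{\rm inv},2}p_\k^4/h_\k^2$ not present in $\tau$. Because Lemma~\ref{sec4:DG-lemma inverse} requires $p$-coverability and shape regularity (Assumption~\ref{sec4:assumption_shape_regular_mesh}), the only subtlety is to ensure the constants in both $C_{\rm INV}$ and $C_{{\rm inv},2}$ are independent of facet measures, which is already guaranteed by Lemmas~\ref{sec4:lemmal_inv} and \ref{sec4:DG-lemma inverse}; the resulting $C_{\rm coer}$ and $C_{\rm cont}$ then inherit this independence.
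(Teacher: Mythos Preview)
Your proposal is correct and follows essentially the same approach as the paper: expand $\tilde{B}(v,v)$ into $\ndg{v}^2$ plus the two cross terms, apply Cauchy--Schwarz/Young's inequality with weights $\sigma^{\pm 1/2}$ and $\tau^{\pm 1/2}$, chain the trace inverse inequality \eqref{sec4:inv_est_gen} with the $H^1$--$L_2$ inverse inequality \eqref{sec4:inverse_h1} for the $\nabla\ip(\Delta v)$ term (and use only the trace inverse for the $\ip(\Delta v)$ term), invoke $L_2$-stability of $\ip$, and absorb via Assumption~\ref{sec4:assumption_no_element_faces} and a large enough choice of $C_\sigma,C_\tau$. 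Your observation that the extra factor $C_{{\rm inv},2}p_\k^4/h_\k^2$ in $\sigma$ arises precisely from chaining both inverse inequalities matches the paper's derivation exactly.
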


\begin{proof}
The proof is based on employing standard arguments. {Firstly, we will prove \eqref{sec4:coer}.  For any} $v\in \mathcal{S}$, we have the following identity
\begin{equation}\label{sec4:coercivity-1}
\tilde{B}(v,v)
=\ndg{v}^2 + 2\int_{\Gamma}\mean{\nabla \ip (\Delta v)} \cdot \jump{v} \ud s
- 2\int_{\Gamma}\mean{ \ip (\Delta v)} \jump{\nabla v}\ud s.
\end{equation}
We start to bound the second term on the right-hand side of \eqref{sec4:coercivity-1}. To this end, given $F\in \Gamma_{\dint}$, such that $F\subset \partial \k_i \cap \partial \k_j$, with $\k_i,\k_j \in \mesh$, upon employing   the Cauchy--Schwarz inequality and 
the arithmetic--geometric mean inequality, we deduce that
\begin{equation*} 
\begin{aligned}
 \int_{F}\mean{\nabla \ip (\Delta v)} \cdot \jump{v} \ud s 
&\leq
 \frac{1}{2}\left(\norm{{\sqrt{\sigma^{-1}}}  \nabla \ip(\Delta v_i)}{L_2(F)}+\norm{{\sqrt{\sigma^{-1}}}  \nabla \ip(\Delta v_j)}{L_2(F)}  \right)          
 \norm{\sqrt{\sigma}\jump{v}}{L_2(F)} \\
 &\leq 
\epsilon \left(\norm{{\sqrt{\sigma^{-1}}}  \nabla \ip(\Delta v_i)}{L_2(F)}^2+\norm{{\sqrt{\sigma^{-1}}} \nabla  \ip(\Delta v_j)}{L_2(F)}^2  \right)   
 + \frac{1}{8\epsilon} \norm{\sqrt{\sigma}\jump{v}}{L_2(F)}^2. 
\end{aligned}
\end{equation*}
Using the inverse inequalities stated in Lemma~\ref{sec4:lemmal_inv} and Lemma~\ref{sec4:DG-lemma inverse}, we deduce {that }
\begin{equation} 
\begin{aligned}
& \int_{F}\mean{\nabla \ip (\Delta v)} \cdot \jump{v} \ud s  \\
&\leq
\epsilon  \left(
C_{\rm INV}(p_{\k_i},{\k_i},F)\frac{ p_{\k_i}^2|F|}{|{\k_i}|}  \norm{{\sqrt{\sigma^{-1}}}  \nabla \ip(\Delta v_i)}{L_2(\k_i)}^2 \right. \\
& \quad 
\left.+ 
C_{\rm INV}(p_{\k_j},{\k_j},F)\frac{ p_{\k_j}^2|F|}{|{\k_j}|}
  \norm{{\sqrt{\sigma^{-1}}}  \nabla \ip(\Delta v_j)}{L_2(\k_j)}^2
 \right)   
 + \frac{1}{8\epsilon} \norm{\sqrt{\sigma}\jump{v}}{L_2(F)}^2 \\
 & \leq \epsilon  \left(
C_{\rm INV}(p_{\k_i},{\k_i},F)\frac{ p_{\k_i}^2|F|}{|{\k_i}|}  
(C_{{\rm inv},2} \frac{p^4_{\k_i}}{h_{\k_i}^2}) \sigma^{-1}
\norm{  \ip(\Delta v_i)}{L_2(\k_i)}^2 \right. \\
& \quad 
\left.+ 
C_{\rm INV}(p_{\k_j},{\k_j},F)\frac{ p_{\k_j}^2|F|}{|{\k_j}|}
(C_{{\rm inv},2} \frac{p^4_{\k_j}}{h_{\k_j}^2}) \sigma^{-1}
\norm{  \ip(\Delta v_j)}{L_2(\k_j)}^2
 \right)   
 + \frac{1}{8\epsilon} \norm{\sqrt{\sigma}\jump{v}}{L_2(F)}^2.
\end{aligned}
\end{equation}
By using the stability of the $L_2$-projector $\ip$ in the $L_2$-norm together with the Definition of $\sigma$ in \eqref{sec4:eq:sigma}, we have 
\begin{equation} \label{coer-relation1}
\begin{aligned}
 \int_{F}\mean{\nabla \ip (\Delta v)} \cdot \jump{v} \ud s  
 & \leq \frac{\epsilon}{C_\sigma}  \left(
\norm{  \Delta v_i}{L_2(\k_i)}^2 
+ 
\norm{ \Delta v_j}{L_2(\k_j)}^2
 \right)   
 + \frac{1}{8\epsilon} \norm{\sqrt{\sigma}\jump{v}}{L_2(F)}^2.
\end{aligned}
\end{equation}
Similarly, for $F\in \Gamma_{\ddd}$, where $F \subset \partial \k \cap \partial \Omega$, $\k\in \mesh$, we have 
\begin{equation} \label{coer-relation2}
\begin{aligned}
 \int_{F}\mean{\nabla \ip (\Delta v)} \cdot \jump{v} \ud s  
 & \leq \frac{\epsilon}{C_\sigma}  
\norm{  \Delta v}{L_2(\k)}^2 
 + 
 \frac{1}{4\epsilon} \norm{\sqrt{\sigma}\jump{v}}{L_2(F)}^2.
\end{aligned}
\end{equation}
Next, we {seek a bound on} the last term on the right-hand side of \eqref{sec4:coercivity-1}. We point out that only using the trace inverse inequality in Lemma~\ref{sec4:lemmal_inv} and Definition of $\tau$ in \eqref{sec4:eq:tau}, the following two relations hold.  For $F\in \Gamma_{\dint}$, where $F \subset \partial \k_i \cap \partial \k_j$, we have 
\begin{equation} \label{coer-relation3}
\begin{aligned}
 \int_{F}\mean{ \ip (\Delta v)} \jump{\nabla v} \ud s  
 & \leq \frac{\epsilon}{C_\tau}  \left(
\norm{  \Delta v_i}{L_2(\k_i)}^2 
+ 
\norm{ \Delta v_j}{L_2(\k_j)}^2
 \right)   
 + \frac{1}{8\epsilon} \norm{\sqrt{\tau}\jump{\nabla v}}{L_2(F)}^2.
\end{aligned}
\end{equation}
Similarly, for $F\in \Gamma_{\ddd}$, where $F \subset \partial \k \cap \partial \Omega$, $\k\in \mesh$, we have 
\begin{equation} \label{coer-relation4}
\begin{aligned}
 \int_{F}\mean{ \ip (\Delta v)}  \jump{\nabla v} \ud s  
 & \leq \frac{\epsilon}{C_\tau}  
\norm{  \Delta v}{L_2(\k)}^2 
 + 
 \frac{1}{4\epsilon} \norm{\sqrt{\tau}\jump{\nabla v}}{L_2(F)}^2.
\end{aligned}
\end{equation}
Inserting \eqref{coer-relation1}, \eqref{coer-relation2}, \eqref{coer-relation3} and \eqref{coer-relation4} into the \eqref{sec4:coercivity-1}, we deduce that 
$$
\tilde{B}(v,v)
\geq 
\Big(1 - \frac{2\epsilon C_F}{C_\sigma}  -  \frac{2\epsilon C_F}{C_\tau}\Big) 
\sum_{\el} \ltwo{\Delta v}{L_2(\k)} ^2 
+
\Big(1 -\frac{1}{2\epsilon} \Big) 
\int_{\Gamma}  \Big( \sigma |\jump{v}|^2 
+
\tau |\jump{\nabla v}|^2 \Big)\ud{s},
$$
because the number of element faces {is} bounded by Assumption \ref{sec4:assumption_no_element_faces}. So the bilinear form $\tilde{B}(\cdot,\cdot)$ is coercive over $\mathcal{S} \times \mathcal{S}$, if $C_\sigma > 4\epsilon C_F$, $C_\tau > 4\epsilon C_F$ and $\epsilon>\frac{1}{2}$.  The proof of continuity follows immediately.  

\end{proof}

%\begin{remark}
%By noticing  that  $\ip(\Delta v)|_\k \in \mathcal{P}_{p_\k-2}(\k)$,  we point out that  the inverse inequality constants appeared in the Definition \ref{sec4:eq:sigma} and \ref{sec4:eq:tau} should depends on $(p_\k -2)$ rather than $p_\k$. However, we already assume that $p_\k\geq 2$, for $\k\in \mesh$, throughout this work. So we 
%\end{remark}

\subsection{Polynomial approximation} \label{sec4:approximation}
In this section, we will revisit the polynomial approximation results in  the context of general polytopic elements from \cite{DGpolybook} without proof. To this end, we introduce the definition and  the assumption  of suitable covering of the mesh by an overlapping set of shape-regular simplices.

\begin{definition}\label{sec4:definition_mesh_covering}
We define the \emph{covering} $\coveringmesh = \{ \mathcal{K} \}$ 
related to the computational mesh $\mesh$ as a   
set of open shape-regular $d$--simplices  $\mathcal{K}$, such that, for each $\el$, 
there exists a $\mathcal{K}\in\coveringmesh$, such that $\k\subset\mathcal{K}$. 
Given $\coveringmesh$, we denote by $\Omega_{\sharp}$ the \emph{covering domain} given by $\bar{\Omega}_{\sharp}:=\cup_{\mathcal{K}\in\coveringmesh}\bar{\mathcal{K}}$. 
\end{definition}

\begin{assumption}\label{sec4:assumption_overlap}
We assume that there exists a covering $\coveringmesh$ of $\mesh$ and a positive constant 
${\cal O}_{\Omega}$, independent of the mesh parameters, such that
$$
\max_{\k \in \mesh} \mbox{card} 
      \Big\{ \k^\prime \in \mesh : 
              \k^\prime \cap \mathcal{K} \neq \emptyset, ~\mathcal{K}\in\coveringmesh ~\mbox{ such that } ~\k\subset\mathcal{K} \Big\} \leq {\cal O}_{\Omega},
$$
and
$$
h_{\mathcal{K}}:=\diam(\mathcal{K})\le C_{\diam} h_{\k},
$$ 
for each pair $\el$, $\mathcal{K}\in\coveringmesh$, with $\k\subset\mathcal{K}$, for a constant $C_{\diam}>0$, uniformly with respect to the mesh size.
\end{assumption}

 We point out that functions defined in 
$\Omega$ can be extended to the covering domain $\Omega_{\sharp}$ using the following classical extension operator, cf. \cite{stein}.

\begin{theorem} \label{sec4:thm-extension}
Let $\Omega$ be a domain with  Lipschitz boundary. Then there exists a linear extension
operator $\frak{E}:H^s(\Omega) {\rightarrow} H^s({\mathbb R}^d)$, $s \in {\mathbb N}_0$, such that
$\frak{E}v|_{\Omega}=v$ and
$$
\| \frak{E} v \|_{H^s({\mathbb R}^d)} \leq C_{\frak{E}} \| v \|_{H^s(\Omega)},
$$
where $C_{\frak{E}}$ is a positive constant depending only on $s$ and $\Omega$.
\end{theorem}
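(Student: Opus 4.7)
The plan is to follow Stein's classical construction from \cite{stein}, proceeding in three stages: localization to a special Lipschitz situation, construction of a regularized distance, and a reflection-type extension formula.

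First, since $\partial\Omega$ is compact and Lipschitz, it admits a finite open cover $\{U_j\}_{j=1}^N$ such that, after a rigid motion, each $\Omega\cap U_j$ coincides with the epigraph of a globally Lipschitz function $\phi_j:\mathbb{R}^{d-1}\to\mathbb{R}$. Using a smooth partition of unity $\{\chi_j\}_{j=0}^N$ subordinate to $\{U_j\}$ with $\chi_0$ supported strictly inside $\Omega$, the global problem reduces to extending each $\chi_j v$ ($j\ge 1$) across a single Lipschitz graph; the interior piece $\chi_0 v$ is simply extended by zero. Linearity of the construction is preserved under this sum-localization, so it suffices to produce a linear extension operator in the special Lipschitz case with a uniform bound.

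In this special Lipschitz case, writing $\Omega=\{(x',y):y>\phi(x')\}$, construct via a Whitney decomposition of $\Omega^c$ a regularized distance $\delta^*:\Omega^c\to\mathbb{R}_+$ that is smooth, comparable to $\dist(\cdot,\partial\Omega)$, and satisfies $|D^{\alpha}\delta^*(x)|\le C_\alpha\,\dist(x,\partial\Omega)^{1-|\alpha|}$ for every multi-index $\alpha$. Then define, for $(x',y)\in\Omega^c$,
\[
\frak{E}v(x',y):=\int_1^\infty v\!\left(x',\,y+t\,\delta^*(x',y)\right)\psi(t)\,dt,
\]
where $\psi\in C([1,\infty))$ decays faster than any polynomial and satisfies a system of moment conditions of the form $\int_1^\infty t^k\psi(t)\,dt=a_k$ for $k=0,1,\dots,s$, with the $a_k$ chosen so that the normal derivatives of $\frak{E}v$ up to order $s$ agree in the trace sense with those of $v$ across $\partial\Omega$. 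This matching guarantees $\frak{E}v\in H^s(\mathbb{R}^d)$. The norm estimate then comes from differentiating under the integral (where the chain rule produces powers of $\nabla\delta^*$, controlled by Stage~2), applying Minkowski's integral inequality, performing the change of variables $z=y+t\delta^*(x',y)$, and exploiting rapid decay of $\psi$ to bound every resulting integral by $\|v\|_{H^s(\Omega)}$.

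The principal obstacle is the joint construction of $\delta^*$ and $\psi$: $\delta^*$ must be smooth and have controlled derivatives of all orders despite $\partial\Omega$ being merely Lipschitz, while $\psi$ must simultaneously satisfy a finite list of moment equations and prescribed rapid decay so that the chain-rule singularities cancel against the $\dist^{1-|\alpha|}$-growth of the derivatives of $\delta^*$. This delicate balance is Stein's technical contribution; the complete verification of the pointwise derivative estimates, the matching of traces, and the resulting weighted $L_2$-bounds is carried out in detail in Chapter~VI of \cite{stein}, and we simply appeal to it here.
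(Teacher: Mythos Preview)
The paper does not prove this theorem; it is stated as a classical result and attributed to Stein via the preceding ``cf.~\cite{stein}''. Your sketch is a faithful outline of Stein's construction in Chapter~VI of that reference, so you and the paper are doing the same thing---both ultimately defer to \cite{stein}, you simply unpack more of what is inside. One minor remark: as written, your moment conditions on $\psi$ run only up to order $s$, which would produce an $s$-dependent operator; Stein's actual $\psi$ (built from an oscillatory integrand) satisfies $\int_1^\infty t^k\psi(t)\,\mathrm{d}t=(-1)^k$ for \emph{every} $k\ge 0$, so that a single universal operator extends all Sobolev orders simultaneously, which is the usual reading of the statement.
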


\begin{lemma}\label{sec4:lemma_approx_polytopes}
Let $\el$, $F \subset \partial \k$ denote one of its faces, and 
$\mathcal{K}\in\coveringmesh$ be the corresponding simplex, 
such that $\k\subset\mathcal{K}$, cf. Definition~\ref{sec4:definition_mesh_covering}.
Suppose that $v\in L_2(\Omega)$ is such that 
$\frak{E}v|_{\mathcal{K}}\in H^{l_{\k}}(\mathcal{K})$, for some $l_\k\ge 0$. Then, given
Assumption \ref{sec4:assumption_overlap} is satisfied, there {exists}  $\tilde{\Pi}_p v|_\k \in \mathcal{P}_p(\k)$, such that following bounds hold
\begin{equation}\label{sec4:approxH_k}
\norm{v - \tilde{\Pi}_p v}{H^q(\k)} %&\le& 
\le C_{1} \frac{h_{\k}^{s_{\k}-q}}{p^{l_{\k}-q}}\norm{\frak{E}v}{H^{l_{\k}}(\mathcal{K})},\quad l_\k\ge 0,%\\
\end{equation}
for $0\le q\le l_\k$,  
\begin{equation}\label{sec4:approx-inf_k}
\norm{v - \tilde{\Pi}_p v}{L_2(F)}
\le C_{2} |F|^{1/2}\frac{h_{\k}^{s_{\k}-d/2}}{p^{l_{\k}-1/2}} 
%(C_m^{p,\k,F})^{1/2}
C_m(p,\k,F)^{1/2}
\norm{\frak{E} v}{H^{l_\k}(\mathcal{K})}, ~ l_{\k}>d/2,~~~~~~~~
\end{equation}
{and 
\begin{equation}\label{sec4:approx-inf_k_grad}
\norm{\nabla(v - \tilde{\Pi}_p v)}{L_2(F)}
\le C_{3} |F|^{1/2}\frac{h_{\k}^{s_{\k}-(d+2)/2}}{p^{l_{\k}-3/2}} 
%(C_m^{p,\k,F})^{1/2}
C_m(p,\k,F)^{1/2}
\norm{\frak{E} v}{H^{l_\k}(\mathcal{K})}, ~ l_{\k}>(d+2)/2,~~~~~~~~
\end{equation} }
where
$$
%C_m^{p,\k,F}
C_m(p,\k,F) 
= \min \Big\{ \frac{h_\k^d}{\sup_{\k_{\flat}^F\subset \k}|\k_{\flat}^F|},p^{d-1} \Big\},
$$
$s_{\k}=\min\{p+1, l_{\k}\}$ and {$C_{1}$, $C_{2}$  and $C_{3}$
are positive constants, }
which depend on the shape-regularity of ${\mathcal{K}}$, but are
independent of $v$, $h_{\k}$, and $p$.
\end{lemma}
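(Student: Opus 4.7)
The plan is an extend–project–restrict strategy that is standard for approximation on polytopic meshes. First, I would apply the Stein extension from Theorem~\ref{sec4:thm-extension} to obtain $\frak{E}v \in H^{l_\k}(\mathbb{R}^d)$ with $\norm{\frak{E}v}{H^{l_\k}(\mathbb{R}^d)} \le C_{\frak{E}}\norm{v}{H^{l_\k}(\Omega)}$. Then, on the covering simplex $\mathcal{K}\in\coveringmesh$ with $\k\subset\mathcal{K}$ provided by Assumption~\ref{sec4:assumption_overlap}, I would invoke the classical $hp$-version polynomial approximation on shape-regular simplices (of Babu\v{s}ka--Suri type) to produce $Q \in \mathcal{P}_p(\mathcal{K})$ satisfying
$$
\norm{\frak{E}v - Q}{H^q(\mathcal{K})} \le C\,\frac{h_{\mathcal{K}}^{s_\k-q}}{p^{l_\k-q}}\,\norm{\frak{E}v}{H^{l_\k}(\mathcal{K})}
$$
for every $0\le q\le l_\k$, and define $\tilde{\Pi}_p v := Q|_\k \in \mathcal{P}_p(\k)$.

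The interior estimate \eqref{sec4:approxH_k} is then immediate: restricting the displayed bound from $\mathcal{K}$ to $\k \subset \mathcal{K}$, using $h_\mathcal{K} \le C_{\diam} h_\k$ from Assumption~\ref{sec4:assumption_overlap}, and recalling $\frak{E}v|_\Omega = v$ gives the claim.

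For the face bound \eqref{sec4:approx-inf_k}, set $e := \frak{E}v - Q$ and control $\norm{e}{L_2(F)}$ via two complementary arguments, keeping the smaller. For the first, apply a standard continuous trace inequality on the sub-simplex $\k_\flat^F \in \mathcal{F}_\flat^\k$, whose face $F$ has measure $|F|$ and whose perpendicular height is $d|\k_\flat^F|/|F|$; careful scaling produces a prefactor $|F|/|\k_\flat^F|$ multiplying the $L_2$ and $H^1$ norms of $e$ on $\k_\flat^F \subset \mathcal{K}$, which combine with the $H^q(\mathcal{K})$ bound above, after taking the supremum over $\k_\flat^F$, to yield the branch $h_\k^d/\sup|\k_\flat^F|$ of $C_m(p,\k,F)$. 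For the second (relevant when $\k$ is $p$-coverable), fix an intermediate polynomial $R\in\mathcal{P}_p(\k)$ approximating $\frak{E}v$ locally, split $e = (\frak{E}v - R) + (R - Q)$, bound the smooth piece by a continuous trace inequality, and bound the polynomial piece using Lemma~\ref{sec4:lemmal_inv}, whose constant $C_{\rm INV}(p,\k,F)$ is majorised by $p^{2(d-1)}$ in the $p$-coverable regime. Taking the minimum of the two estimates delivers exactly the $C_m(p,\k,F)^{1/2}$ factor in \eqref{sec4:approx-inf_k}. The gradient bound \eqref{sec4:approx-inf_k_grad} follows by the same two-pronged argument applied to $\nabla e$, which requires one additional derivative of regularity on $\frak{E}v$ and introduces the extra $h_\k^{-1}$ factor visible in the exponent $s_\k - (d+2)/2$.

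The principal obstacle is the face estimate. A direct trace inequality on $\mathcal{K}$ is of no use because $F$ is not generally a face of the covering simplex, and a direct continuous trace on $\k_\flat^F$ alone blows up when the sub-simplex is thin (i.e.\ when $|\k_\flat^F|/|F|$ is small). The two-branch decomposition is precisely what lets $C_m(p,\k,F)$ absorb both pathologies: the $|F|/|\k_\flat^F|$ branch handles elements with chunky subsimplices, while the $p^{d-1}$ branch, which requires $p$-coverability, handles subsimplices of arbitrarily degenerating measure.
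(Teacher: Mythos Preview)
The paper does not actually prove this lemma: it cites \cite[Lemma~23]{DGpolybook} for \eqref{sec4:approxH_k} and \eqref{sec4:approx-inf_k} and remarks that \eqref{sec4:approx-inf_k_grad} follows in the same fashion. Your extend--project--restrict construction of $\tilde\Pi_p v$, the derivation of \eqref{sec4:approxH_k}, and the first branch of the face bound (via a scaled continuous trace inequality on the sub-simplex $\k_\flat^F$, producing the factor $h_\k^d/\sup|\k_\flat^F|$) are the right ingredients and agree with that reference.

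Your description of the second branch, however, does not deliver the $p^{d-1}$ alternative in $C_m(p,\k,F)$. Feeding the polynomial piece $R-Q$ through Lemma~\ref{sec4:lemmal_inv} yields a trace constant $C_{\rm INV}(p,\k,F)\,p^2|F|/|\k|\le p^{2d}|F|/|\k|$, leaving a factor $|\k|^{-1}$ that cannot be converted to $h_\k^{-d}$ without an additional shape-regularity hypothesis on $\k$ itself---and no such hypothesis is assumed in the lemma. The argument that actually produces the $p^{d-1}$ branch is an $L_\infty$ estimate: one bounds
\[
\norm{v-\tilde\Pi_p v}{L_2(F)} \le |F|^{1/2}\,\norm{\frak{E}v - Q}{L_\infty(\mathcal{K})}
\]
and then invokes the $hp$--version $L_\infty$ approximation estimate on the shape-regular covering simplex $\mathcal{K}$, which gives the rate $h_\k^{s_\k-d/2}/p^{l_\k-d/2}$ and hence exactly the factor $p^{(d-1)/2}$ after normalisation. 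This $L_\infty$ route is precisely why the regularity thresholds $l_\k>d/2$ in \eqref{sec4:approx-inf_k} and $l_\k>(d+2)/2$ in \eqref{sec4:approx-inf_k_grad} appear; your splitting offers no explanation for them.
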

\begin{proof}
{The proof for  relation \eqref{sec4:approxH_k} and \eqref{sec4:approx-inf_k} can be found  in \cite[Lemma $23$]{DGpolybook}.  The relation \eqref{sec4:approx-inf_k_grad} can be proven in the similar fashion. }
\end{proof}
%\begin{remark}
%We {point out} the above approximation result {\eqref{sec4:approx-inf_k} and \eqref{sec4:approx-inf_k_grad} } for the $L_2$-norm {on} the element face $F$ depends on the approximation results in {the} $L_\infty$-norm.  That is the reason {why} we need additional regularity $l_\k\geq d/2$ in \eqref{sec4:approx-inf_k}. We will relax this additional regularity requirement n Section \ref{unbounded_faces} based on a different mesh assumption.
%\end{remark}

\subsection{A priori error analysis} \label{sec4:A priori}

We now embark on the error analysis of the DGFEM \eqref{galerkin_dg}. First, we point out that Galerkin orthogonality does not hold due to the inconsistency of $\tilde{B}(\cdot,\cdot)$. Thereby, we derive the following abstract error bound in the spirit of Strang's second lemma, cf. \cite[Theorem 4.2.2]{ciarlet}.
\begin{lemma} \label{sec4:strang}
Let $u\in H^2(\Omega)$ be the weak solution of~\eqref{pde}, \eqref{bcs}  
and $u_h\in \fes$ the DGFEM solution {defined} by~\eqref{galerkin_dg}.
Under the hypotheses of Lemma~\ref{sec4:lem:coercivity}, the following
abstract error bound holds
\begin{equation}\label{sec4:strang_inequality}
\ndg{u-u_h} \leq \left(1+\frac{C_{\rm cont}}{C_{\rm coer}} \right) \inf_{v_h\in \fes} \ndg{u-v_h}
+ \frac{1}{C_{\rm coer}}  \sup_{w_h\in\fes\backslash\{0\}} \frac{|\tilde{B} (u, w_h)-\tilde{\ell} (w_h)|}{\ndg{w_h}}.
\end{equation} 
\end{lemma}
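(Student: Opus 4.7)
The plan is to follow the classical Strang-style argument, adapted to the inconsistent bilinear form $\tilde{B}(\cdot,\cdot)$. The key observation is that even though $\tilde{B}$ differs from $B$ on $\mathcal{S} \times \mathcal{S}$ (because of the $L_2$-projection $\ip$ inserted on the Laplacian), the two forms coincide on $\fes \times \fes$ and likewise $\tilde{\ell}$ and $\ell$ agree on $\fes$. Consequently, the discrete solution $u_h$ still satisfies $\tilde{B}(u_h, w_h) = \tilde{\ell}(w_h)$ for every $w_h \in \fes$, which is the substitute for Galerkin orthogonality that drives the argument.

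First I would split the error via the triangle inequality: for arbitrary $v_h \in \fes$, write $\ndg{u - u_h} \leq \ndg{u - v_h} + \ndg{v_h - u_h}$. Only the second term requires work. Applying coercivity from Lemma~\ref{sec4:lem:coercivity} to $v_h - u_h \in \fes \subset \mathcal{S}$ gives
\begin{equation*}
C_{\rm coer}\, \ndg{v_h - u_h}^2 \leq \tilde{B}(v_h - u_h,\, v_h - u_h).
\end{equation*}
I then add and subtract $u$ in the first slot to split
\begin{equation*}
\tilde{B}(v_h - u_h, v_h - u_h) = \tilde{B}(v_h - u, v_h - u_h) + \bigl[\tilde{B}(u, v_h - u_h) - \tilde{\ell}(v_h - u_h)\bigr],
\end{equation*}
where in the bracketed term I used $\tilde{B}(u_h, v_h - u_h) = B(u_h, v_h - u_h) = \ell(v_h - u_h) = \tilde{\ell}(v_h - u_h)$, since $v_h - u_h \in \fes$ and the two formulations coincide on the discrete space.

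Next I bound the first term by continuity of $\tilde{B}$ on $\mathcal{S} \times \mathcal{S}$, obtaining $C_{\rm cont}\, \ndg{u - v_h}\,\ndg{v_h - u_h}$. The second (residual) term is bounded by $\sup_{w_h \in \fes \setminus \{0\}} |\tilde{B}(u, w_h) - \tilde{\ell}(w_h)| / \ndg{w_h}$ multiplied by $\ndg{v_h - u_h}$. Dividing both sides of the resulting inequality by $\ndg{v_h - u_h}$ yields
\begin{equation*}
\ndg{v_h - u_h} \leq \frac{C_{\rm cont}}{C_{\rm coer}} \ndg{u - v_h} + \frac{1}{C_{\rm coer}} \sup_{w_h \in \fes \setminus \{0\}} \frac{|\tilde{B}(u, w_h) - \tilde{\ell}(w_h)|}{\ndg{w_h}}.
\end{equation*}
Inserting this back into the triangle inequality and taking the infimum over $v_h \in \fes$ delivers \eqref{sec4:strang_inequality}. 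There is essentially no technical obstacle here: the only subtlety is correctly tracking that the inconsistency is isolated in the single residual $\tilde{B}(u, w_h) - \tilde{\ell}(w_h)$, while coercivity and continuity are already available on the full space $\mathcal{S}$ from Lemma~\ref{sec4:lem:coercivity}. The genuinely hard work will come later, when this residual is quantified in terms of $\ip$-approximation errors on the mesh skeleton.
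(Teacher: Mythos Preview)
Your proposal is correct and is precisely the classical Strang second lemma argument that the paper invokes (it does not spell out the proof, merely citing \cite[Theorem 4.2.2]{ciarlet}). The only cosmetic point worth noting is the implicit division by $\ndg{v_h-u_h}$, which should be accompanied by the remark that if $v_h=u_h$ the bound holds trivially.
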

We now derive the main results of this work.

\begin{theorem} \label{sec4:thm:apriori}
Let $\mesh=\{\k\}$ be a subdivision of $\Omega\subset\mathbb{R}^d$, $d=2,3$, 
consisting of general polytopic elements satisfying 
Assumptions~\ref{sec4:assumption_no_element_faces},~\ref{sec4:assumption_overlap} and \ref{sec4:assumption_shape_regular_mesh}, 
with $\coveringmesh=\{\mathcal{K}\}$ an associated covering of $\mesh$ consisting of shape-regular $d$--simplices, cf. Definition~\ref{sec4:definition_mesh_covering}.
Let $u_h\in \fes$, with $p_\k \geq2$ for all $\k\in\mesh$, be the corresponding DGFEM solution
defined by \eqref{galerkin_dg}, where the discontinuity-penalization function $\sigma$ and $\tau$ are given by~\eqref{sec4:eq:sigma}  and~\eqref{sec4:eq:tau}, respectively.
If the analytical solution $u\in H^2(\Omega)$  satisfies $u|_\k \in H^{l_\k}(\k)$, $l_\k>3+d/2$, for each $\k \in \mesh$, such that 
$\frak{E}u|_{\mathcal{K}}\in H^{l_\k}(\mathcal{K})$, where $\mathcal{K}\in\coveringmesh$ 
with $\k\subset\mathcal{K}$, then
\begin{equation}\label{sec4:final-error}
\ndg{u-u_{h}}^2 \le C\su \frac{h_{\k}^{2(s_{\k}-2)}}{p_{\k}^{2(l_{\k}-2)}} 
\left({\cal G}_\k(F, C_m,p_\k)
+{\cal D}_\k(F,C_{\rm INV},C_m,p_\k)
\right)
\|{\frak E} u\|_{H^{l_{\k}}(\mathcal{K})}^2,
\end{equation}
with $s_{\k}=\min\{p_{\k}+1,l_\k\}$, 
\begin{equation}
\begin{aligned} \label{sec4:con-error}
{\cal G}_\k(F,C_m,p_\k) 
:=
1 + \frac{h_\k^{-d+4}}{p_\k^{3} }
\sum_{F\subset\partial\k}
C_m(p_\k,\k,F)%C_m^{p_\k,\k,F}
\sigma|F| 
 +\frac{h_\k^{-d+2}}{p_\k}
\sum_{F\subset\partial\k}
C_m(p_\k,\k,F)%C_m^{p_\k,\k,F}
\tau|F|.  
\end{aligned}
\end{equation}
and 
\begin{equation}
\begin{aligned} \label{sec4:incon-error}
{\cal D}_\k(F,C_{\rm INV},C_m,p_\k) 
&:= \frac{h_\k^{-d-2}}{p_\k^{-3}} 
 \sum_{F\subset\partial\k}
C_m(p_\k,\k,F)
\sigma^{-1}|F| 
+
\frac{|\k|^{-1} h_{\k}^{-2}}{p^{-6}_{\k}}
\sum_{F\subset\partial\k}
C_{\rm INV}(p_{\k},\k,F)%C_{\rm INV}^{p_{\k},\k,F}
\sigma^{-1}|F|
\\
& \hspace{-0cm}+ \frac{h_\k^{-d}}{p_\k^{-1}} 
 \sum_{F\subset\partial\k}
C_m(p_\k,\k,F)
\tau^{-1}|F| 
+
\frac{|\k|^{-1}}{p^{-2}_{\k}}
\sum_{F\subset\partial\k}
C_{\rm INV}(p_{\k},\k,F)%C_{\rm INV}^{p_{\k},\k,F}
\tau^{-1}|F|
\end{aligned}
\end{equation}
where $C$ is a positive constant, which depends on the shape-regularity
of $\coveringmesh$, but is independent of the 
discretization parameters.
\end{theorem}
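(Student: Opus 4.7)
The plan is to apply the abstract error estimate from Lemma \ref{sec4:strang} and bound the two terms on its right-hand side separately. For the approximation term $\inf_{v_h\in\fes}\ndg{u-v_h}$, I would select $v_h=\tilde\Pi_p u$, the polynomial quasi-interpolant from Lemma \ref{sec4:lemma_approx_polytopes}, and expand
\[
\ndg{u-\tilde\Pi_p u}^2=\sum_{\el}\|\Delta(u-\tilde\Pi_p u)\|_{L_2(\k)}^2+\int_\Gamma\sigma|\jump{u-\tilde\Pi_p u}|^2\ud s+\int_\Gamma\tau|\jump{\nabla(u-\tilde\Pi_p u)}|^2\ud s.
\]
The first sum is controlled by setting $q=2$ in \eqref{sec4:approxH_k}, producing the leading factor $h_\k^{2(s_\k-2)}/p_\k^{2(l_\k-2)}$ together with the constant $1$ in ${\cal G}_\k$. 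The $\sigma$-weighted jump term is handled by summing \eqref{sec4:approx-inf_k} over the faces of each element, producing the $\sigma|F|\,C_m(p_\k,\k,F)$ contribution once I track the powers of $h_\k$ and $p_\k$ arising from $|F|^{1/2}$, $h_\k^{s_\k-d/2}$ and $p_\k^{-(l_\k-1/2)}$. Analogously, the $\tau$-weighted jump term is bounded via \eqref{sec4:approx-inf_k_grad}, producing the second $C_m$ contribution in ${\cal G}_\k$. A standard polytopic shape-regularity bookkeeping turns the resulting face sums into the expression displayed in \eqref{sec4:con-error}.

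For the inconsistency term I would exploit that $\Delta^2u=f$ in each $\k$ and that $u\in H^2(\Omega)$ satisfies $\jump{u}=0$ on $\Gamma_{\dint}$, $\jump{\nabla u}=0$ on $\Gamma_{\dint}$, and matches $g_\ddd,g_\dn$ on $\Gamma_\ddd$. Subtracting $\tilde\ell(w_h)$ from $\tilde B(u,w_h)$ and integrating $\int_\k \Delta u\Delta w_h$ by parts twice element-wise cancels all consistent contributions, leaving only the terms generated by the $L_2$-projector $\ip$, namely
\[
\tilde B(u,w_h)-\tilde\ell(w_h)=\int_\Gamma\mean{\nabla(\ip-\mathrm{id})(\Delta u)}\cdot\jump{w_h}\ud s-\int_\Gamma\mean{(\ip-\mathrm{id})(\Delta u)}\jump{\nabla w_h}\ud s.
\]
Each of these two face integrals I would bound by Cauchy--Schwarz after inserting $\sqrt{\sigma}/\sqrt{\sigma}$ and $\sqrt{\tau}/\sqrt{\tau}$ respectively. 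The $\sqrt{\sigma}\jump{w_h}$ and $\sqrt{\tau}\jump{\nabla w_h}$ factors combine into $\ndg{w_h}$. On the remaining side I would split the $L_2(F)$-norm of $(\ip-\mathrm{id})(\Delta u)$ and its gradient into the direct approximation part, controlled by \eqref{sec4:approx-inf_k} and \eqref{sec4:approx-inf_k_grad} applied to $\Delta u\in H^{l_\k-2}(\k)$, and the discrete polynomial part $\ip(\Delta u)-\tilde\Pi_{p-2}(\Delta u)$, controlled by the trace inverse inequality \eqref{sec4:inv_est_gen} followed by $L_2$-stability of $\ip$. Summing over faces and absorbing the $h_\k$, $p_\k$ factors yields exactly the four summands in \eqref{sec4:incon-error}; the two $C_m$ pieces come from the direct approximation estimates, and the two $C_{\rm INV}$ pieces come from the trace inverse inequality applied to $\ip(\Delta u)-\tilde\Pi_{p-2}(\Delta u)$.

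The hard part will not be any single inequality but rather the precise accounting of scaling factors: I have to track four separate powers of $h_\k$, $p_\k$, $|F|$, $|\k|$ through the Cauchy--Schwarz, inverse and approximation steps so that they coalesce into the specific $h_\k^{-d+j}/p_\k^k$ factors displayed in ${\cal G}_\k$ and ${\cal D}_\k$, and to verify that all constants remain independent of face measures and of the local mesh size and polynomial degree. Once the two bounds are inserted into \eqref{sec4:strang_inequality} and squared, \eqref{sec4:final-error} follows with $C$ depending only on the coercivity and continuity constants of Lemma \ref{sec4:lem:coercivity} and the shape-regularity of $\coveringmesh$.
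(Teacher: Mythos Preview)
Your proposal is correct and follows essentially the same route as the paper: Strang's lemma~\eqref{sec4:strang_inequality}, then Lemma~\ref{sec4:lemma_approx_polytopes} for the best-approximation term and the integration-by-parts identity plus a splitting for the inconsistency residual. The only differences are cosmetic --- the paper splits by adding and subtracting $\Delta\tilde\Pi_{\mathbf p}u$ rather than your $\tilde\Pi_{p-2}(\Delta u)$, and for the gradient of the discrete polynomial piece you will also need the $H^1$--$L_2$ inverse inequality~\eqref{sec4:inverse_h1} (not just the trace inequality~\eqref{sec4:inv_est_gen}) to generate the factor $h_\k^{-2}p_\k^{4}$ appearing in the second summand of~$\mathcal{D}_\k$.
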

\begin{proof}
We start with the abstract bound \eqref{sec4:strang_inequality} in Lemma \ref{sec4:strang}. To bound the first term in the right-hand side of \eqref{sec4:strang_inequality}, we employ the approximation results in Lemma \ref{sec4:lemma_approx_polytopes}. We deduce the following bound 
\begin{equation} \label{sec4:dgnorm_approx}
\begin{aligned}
&\inf_{v_h\in \fes} \ndg{u-v_h}^2 
\leq \ndg{u-\tilde{\Pi}_{\bf p} u}^2  \\
&\quad  \leq \sum_{\el} \left(\ltwo{ \Delta (u-\tilde{\Pi}_{p_{\k}} u)}{L_2(\k)} ^2 
+2\sum_{F\subset\partial\k}\sigma  \|(u-\tilde{\Pi}_{p_{\k}} u)|_\k\|_{L_2(F)}^2
+2\sum_{F\subset\partial\k}\tau  \| \nabla(u-\tilde{\Pi}_{p_{\k}} u)|_\k\|_{L_2(F)}^2
 \right) \\
& \quad  \leq C\su \frac{h_{\k}^{2(s_{\k}-2)}}{p_{\k}^{2(l_{\k}-2)}} 
\left(1 + \frac{h_\k^{-d+4}}{p_\k^3} 
\sum_{F\subset\partial\k}
C_m(p_\k,\k,F)%C_m^{p_\k,\k,F}
\sigma|F| \right. \\
& \hspace{6cm}   \left. +\frac{h_\k^{-d+2}}{p_\k} 
\sum_{F\subset\partial\k}
C_m(p_\k,\k,F)%C_m^{p_\k,\k,F}
\tau|F|
\right) %\nno \\
%&& \qquad ~~~\times
\|\frak{E} u\|_{H^{l_{\k}}(\mathcal{K})}^2,~~~~~~
\end{aligned}
\end{equation}
with $s_{\k}=\min\{p_\k+1, l_{\k}\}$ and $C$ a positive constant,
which depends on the shape-regularity of the covering $\coveringmesh$,
but is independent of the discretization parameters. 

We now proceed to bound the residual term arising in \eqref{sec4:strang_inequality}. After integration by parts, and noting that
$u$ is the solution of~\eqref{pde}, we get
\begin{equation} \label{sec4:incon-general}
\begin{aligned}
&\left|\tilde{B}_{\rm d} (u, w_h)-\tilde{\ell} (u, w_h)\right|
=
\Big|\int_{\Gamma}\mean{\nabla \Delta u- \nabla \ip(\Delta u)}\cdot\jump{w_h}
- 
\mean{ \Delta u-\ip (\Delta u)}\jump{\nabla  w_h}\ud s\Big| \\
& \quad \le
\Big(\int_{\Gamma} 
\sigma^{-1}|\mean{\nabla \Delta u- \nabla \ip(\Delta u)}|^2\ud s
+\tau^{-1}|\mean{ \Delta u- \ip(\Delta u)}|^2\ud s\Big)^{1/2}\ndg{w_h}.
\end{aligned}
\end{equation}
Next, we derive the error bound for the first term  in the {brackets of  \eqref{sec4:incon-general}. Adding and subtracting   $\nabla \Delta \tilde{\Pi}_{{\bf{p}}}u$,  
\begin{equation*}
\begin{aligned}
&\int_{\Gamma} 
\sigma^{-1}|\mean{\nabla \Delta u- \nabla \ip(\Delta u)}|^2\ud s \\
& ~\leq
\int_{\Gamma} 
2 \sigma^{-1}|\mean{\nabla  \Delta u-  \nabla   \Delta \tilde{\Pi}_{{\bf{p}}}u  }|^2\ud s
+2 \sigma^{-1}|\mean{\nabla \ip \Big(  \Delta \tilde{\Pi}_{{\bf{p}}} u - \Delta u \Big) }|^2\ud s 
\equiv \mbox{I}_1+\mbox{I}_2.
\end{aligned}
\end{equation*}}
Using, the above approximation result {\eqref{sec4:approx-inf_k_grad}}, we have 
\begin{equation} \label{in-relation1}
\mbox{I}_1\leq C\su  \frac{h_{\k}^{2(s_{\k}-3)}}{p_{\k}^{2(l_{\k}-3)}} 
\frac{h_\k^{-d}}{p_\k^{-1}} 
\left( \sum_{F\subset\partial\k}
C_m(p_\k,\k,F)%C_m^{p_\k,\k,F}
\sigma^{-1}|F|\right)
\|\frak{E} u\|_{H^{l_{\k}}(\mathcal{K})}^2.
\end{equation}
Similarly, employing the inverse inequalities~\eqref{sec4:inv_est_gen} and~\eqref{sec4:inverse_h1}, 
the $L_2$-stability of the projector $\ip$, and 
the approximation estimate~\eqref{sec4:approxH_k}, gives
\begin{equation} \label{in-relation2}
\mbox{I}_2 
\leq C\su  \frac{h_{\k}^{2(s_{\k}-2)}}{p_{\k}^{2(l_{\k}-2)}} 
\frac{|\k|^{-1}}{p_{\k}^{-2}}
 \frac{h_{\k}^{-2}}{p^{-4}_{\k}}
\left(\sum_{F\subset\partial\k}
C_{\rm INV}(p_{\k},\k,F)%C_{\rm INV}^{p_{\k},\k,F}
\sigma^{-1}|F|
\right)
\|\frak{E} u\|_{H^{l_{\k}}(\mathcal{K})}^2.
\end{equation}
Next, we will use the same technique to bound the second term in the bracket of  \eqref{sec4:incon-general}. It is easy to see that following relation holds {
\begin{equation*}
\begin{aligned}
&\int_{\Gamma} 
\tau^{-1}|\mean{ \Delta u- \ip(\Delta u)}|^2\ud s \\
& ~\leq
\int_{\Gamma} 
2 \tau^{-1}|\mean{  \Delta u-     \Delta \tilde{\Pi}_{{\bf{p}}}u  }|^2\ud s
+2 \tau^{-1}|\mean{\ip \Big(  \Delta \tilde{\Pi}_{{\bf{p}}} u - \Delta u \Big) }|^2\ud s 
\equiv
\mbox{I}_3+\mbox{I}_4.
\end{aligned}
\end{equation*}}
with 
\begin{equation} \label{in-relation3}
\mbox{I}_3\leq C\su  \frac{h_{\k}^{2(s_{\k}-2)}}{p_{\k}^{2(l_{\k}-2)}} 
\frac{h_\k^{-d}}{p_\k^{-1}} 
\left( \sum_{F\subset\partial\k}
C_m(p_\k,\k,F)%C_m^{p_\k,\k,F}
\tau^{-1}|F|\right)
\|\frak{E} u\|_{H^{l_{\k}}(\mathcal{K})}^2.
\end{equation}
and 
\begin{equation} \label{in-relation4}
\mbox{I}_4 
\leq C\su  \frac{h_{\k}^{2(s_{\k}-2)}}{p_{\k}^{2(l_{\k}-2)}} 
\frac{|\k|^{-1}}{p_{\k}^{-2}}
\left(\sum_{F\subset\partial\k}
C_{\rm INV}(p_{\k},\k,F)%C_{\rm INV}^{p_{\k},\k,F}
\tau^{-1}|F|
\right)
\|\frak{E} u\|_{H^{l_{\k}}(\mathcal{K})}^2.
\end{equation}
By inserting relation \eqref{in-relation1}, \eqref{in-relation2}, \eqref{in-relation3} and \eqref{in-relation4} into the bound \eqref{sec4:incon-general}, together with  relation \eqref{sec4:dgnorm_approx},  the final error bound \eqref{sec4:final-error} is derived.
\end{proof}
\begin{remark}
The above result generalizes well-known {\em a priori} bounds for DGFEMs 
defined on standard element shapes, cf.~\cite{MR2298696,MR2295480,MR2520159}, in two key ways. 
Firstly, meshes comprising polytopic elements are admitted; secondly, elemental  faces are allowed to degenerate. For $d=3$, this also implies that positive measure 
interfaces may have degenerating (one--dimensional) edges.
Thereby, this freedom is relevant to standard (simplicial/hexahedral) meshes with 
hanging nodes in the sense that no condition is required on the location
of hanging nodes on the element {boundary}.
If, on the other hand, the diameter of the faces of each element $\k\in \mesh$ 
is of comparable size to the diameter of the corresponding element, 
for uniform orders $p_\k =p \geq2$,  $h=\max_{\k\in\mesh} h_\k$, $s_\k=s$,
$s=\min\{p+1,l\}$, $l>3+d/2$, then the bound 
of Theorem~\ref{sec4:thm:apriori} reduces to
\begin{equation*}
\ndg{u-u_{h}}  \leq C \frac{h^{s-2}}{p^{l-\frac{7}{2}}} \| u\|_{H^{l}(\Omega)}.
\end{equation*}
This coincides with the analogous result derived in \cite{MR2295480} for standard 
meshes consisting of simplices or tensor-product elements. It is easy to check that  the above {\em a priori} error bound is  optimal in $h$ and suboptimal in $p$ by  $3/2$ order, as expected.
\end{remark}

\section{Error Analysis II: Arbitrary Number of Element Faces}\label{unbounded_faces}

In this section, we pursue the error analysis on meshes 
which potentially violate Assumption~\ref{sec4:assumption_no_element_faces}
in the sense that the number of faces that the elements possess
may not be uniformly bounded under mesh refinement. We note that 
this may arise when sequences of coarser meshes are generated via element agglomeration of a given fine mesh, cf. \cite{hp_multigrid_polytopes_2017}.
To this end, following Definition~\ref{sec4:element_face_simplices}, we introduce the following assumption on the mesh $\mesh$.
\begin{assumption}[Arbitrary number of faces] \label{sec5:new_assumption_no_element_faces} 
For any $\k \in\mesh$, there exists a set of non-overlapping $d$-dimensional 
simplices $\{\k^F_\flat\}_{F\subset \partial \k} \subset \mathcal{F}_{\flat}^{\k}$ 
contained in $\k$, such that for all $F \subset \partial \k$, the following condition
holds 
%only sharing faces $F_\k^i$ with $\k$, such that following condition holds:
\begin{equation}\label{sec5:shape_relation1}
h_\k \leq C_s \frac{d|\k_\flat^F|}{|F|},
\end{equation}
where $C_s$ is a positive constant, which is independent of the discretization parameters, 
the number of faces that the element possesses, and the measure of $F$. 
\end{assumption}

\begin{figure}[t]
\begin{center}
	\includegraphics[scale=0.35]{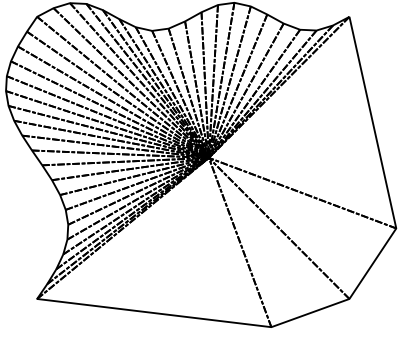}  \\
\end{center}
\caption{Polygon with many tiny faces.}
  \label{polygons}
\end{figure}
In Figure~\ref{polygons} we present one potential polygon in $\mathbb{R}^2$ which
satisfy the above mesh regularity assumption.
We note that Assumption~\ref{sec5:new_assumption_no_element_faces} 
does not place any restriction on either the number of faces that
an element $\k$, $\el$, may possess, or the relative
measure of its faces compared to the measure of the element itself. 
Indeed, shape-irregular simplices $\k_\flat^F$, with base $|F|$ of small 
size compared to the corresponding height, defined by $d|\k_\flat^F|/|F|$, are admitted.
However, the height must be of comparable size to $h_\k$, cf. 
the polygon depicted in Figure~\ref{polygons}. 
Furthermore, we note that the union of the simplices $\k_\flat^F$ does not need 
to cover the whole element $\k$, as in general it is sufficient to assume that
\begin{equation} \label{sec5:shape_relation2}
\bigcup_{F \subset \partial \k} \bar{\k}_\flat^F \subseteq \bar{\k}.
\end{equation}

\subsection{Inverse estimates} \label{sec5:inverse}
We will first revisit some {of} the  trace inverse estimates on simplices, cf.  \cite{warburton2003constants}.%and also polytopic meshes based on the Assumption~\ref{sec5:new_assumption_no_element_faces}.  
\begin{lemma} \label{sec5:inverse_inequalties_standard_elements}
Given a simplex $T$ in $\mathbb{R}^d$, $d=2,3$, we write 
$F\subset\partial T$ to denote one of its faces. Then,
for $v\in \mathcal{P}_p(T)$, the following inverse inequality
holds
\begin{equation} \label{sec5:inv_est_classical} 
	\norm{v}{L_2(F)}^2 
	\le \frac{(p+1)(p+d)}{d}\frac{|F|}{|T|}\norm{v}{L_2(T)}^2.	
\end{equation}
\end{lemma}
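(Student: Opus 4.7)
The plan is to prove the inequality in two steps: first reduce to a reference simplex by affine invariance, then identify the sharp trace constant on the reference simplex using an orthonormal polynomial basis.

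For the affine reduction, let $\Phi : \hat{T} \to T$ be an affine map sending a fixed reference face $\hat{F}$ of a reference simplex $\hat{T}$ onto $F$. Setting $\hat{v} = v \circ \Phi \in \mathcal{P}_p(\hat{T})$, the standard change-of-variables formulas for $L^2$ integrals over $T$ and $F$ show that the ratio
\begin{equation*}
\frac{|T|\,\|v\|_{L^2(F)}^2}{|F|\,\|v\|_{L^2(T)}^2}
\;=\; \frac{|\hat{T}|\,\|\hat{v}\|_{L^2(\hat{F})}^2}{|\hat{F}|\,\|\hat{v}\|_{L^2(\hat{T})}^2}
\end{equation*}
is affinely invariant, since the volume Jacobian $|\det A|$ and the face Jacobian $\sqrt{\det(A_F^\transpose A_F)}$ each appear in the same combination in numerator and denominator. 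It therefore suffices to establish the claimed inequality on $(\hat{T},\hat{F})$ for all $\hat{v} \in \mathcal{P}_p(\hat{T})$, where any convenient reference simplex may be chosen.

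For the second step, I would identify the sharp constant as the largest generalised eigenvalue of the $L^2(\hat{F})$ mass matrix against the $L^2(\hat{T})$ mass matrix on $\mathcal{P}_p(\hat{T})$. Diagonalising the volume mass matrix via the Koornwinder-Dubiner orthonormal basis $\{\phi_\alpha\}_{|\alpha|\le p}$ of $\mathcal{P}_p(\hat{T})$ reduces the computation to evaluating a trace sum of face-restricted basis values. The key structural feature of Koornwinder-Dubiner polynomials is that, with $\lambda$ the barycentric coordinate vanishing on $\hat{F}$, each $\phi_\alpha$ factors as a Jacobi polynomial in $\lambda$ (with weight matching $(1-\lambda)^{d-1}$) times an orthonormal polynomial on $\hat{F}$. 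Setting $\lambda = 0$ and summing the squared trace values by classical normalisation identities for Jacobi polynomials collapses the expression in closed form to $\frac{(p+1)(p+d)}{d}\cdot\frac{|\hat{F}|}{|\hat{T}|}$.

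The principal technical obstacle lies in this closed-form Jacobi polynomial summation, which is the core calculation in \cite{warburton2003constants}. A possible alternative route is to recast the sharp-constant problem variationally as a minimum-$L^2(\hat{T})$-norm extension problem: for prescribed $w \in \mathcal{P}_p(\hat{F})$, minimise $\|\hat{v}\|_{L^2(\hat{T})}$ over all $\hat{v} \in \mathcal{P}_p(\hat{T})$ with $\hat{v}|_{\hat{F}} = w$, so that the sharp constant equals the reciprocal of the smallest generalised singular value of this extension map; this singular value can be identified via an Euler-Lagrange extremal polynomial satisfying an explicit orthogonality condition. Both routes ultimately rest on the same Jacobi-polynomial identities, confirming that the constant $\frac{(p+1)(p+d)}{d}$ is simultaneously attained and sharp.
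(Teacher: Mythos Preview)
Your proposal is correct and follows precisely the argument of \cite{warburton2003constants}, which is exactly what the paper does: the paper presents this lemma without proof, simply citing \cite{warburton2003constants} as the source. So there is nothing to compare---you have sketched the referenced proof, and the paper offers no alternative.
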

By using Lemma \ref{sec5:inverse_inequalties_standard_elements}, together with the mesh Assumption~\ref{sec5:new_assumption_no_element_faces} on general polytopic meshes, the following  trace inverse inequality {holds (cf. \cite[Lemma 4.9]{Peter_phd}).}
\begin{lemma}\label{sec5:lemmal_inv_arbitrary}
Let $\el$; then assuming Assumption \ref{sec5:new_assumption_no_element_faces} 
is satisfied,
for each $v\in\mathcal{P}_p(\k)$, the following inverse inequality holds
\begin{equation}\label{sec5:inv_est_gen_arbitrary}
\norm{v}{L_2(\partial \k)}^2 \le C_s   \frac{{(p+1)(p+d)}}{h_\k} \norm{v}{L_2(\k)}^2. 
\end{equation}
The constant $C_s$ is defined in \eqref{sec5:shape_relation1}, and is
independent of $v$, $|\k|/\sup_{\k_{\flat}^F\subset \k}
|\k_{\flat}^F|$, $|F|$, and $p$.
\end{lemma}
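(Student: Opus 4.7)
The plan is to reduce the bound on $\partial\kappa$ to a face-by-face estimate on the simplices $\kappa_\flat^F$ provided by Assumption~\ref{sec5:new_assumption_no_element_faces}, and then exploit the non-overlapping property of those simplices to absorb the sum over all faces into a single bound on $\kappa$ itself, without incurring a factor proportional to the number of faces.

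First, I would fix an arbitrary face $F\subset\partial\kappa$ and the corresponding simplex $\kappa_\flat^F\subset\kappa$ furnished by Assumption~\ref{sec5:new_assumption_no_element_faces}, which shares $F$ as one of its $(d-1)$-dimensional faces. Since $v\in\mathcal{P}_p(\kappa)$, the restriction $v|_{\kappa_\flat^F}$ also lies in $\mathcal{P}_p(\kappa_\flat^F)$, so the standard simplex trace inverse estimate in Lemma~\ref{sec5:inverse_inequalties_standard_elements} applies directly and yields
\[
\norm{v}{L_2(F)}^2 \le \frac{(p+1)(p+d)}{d}\,\frac{|F|}{|\kappa_\flat^F|}\,\norm{v}{L_2(\kappa_\flat^F)}^2.
\]
Next, I would use the shape condition \eqref{sec5:shape_relation1}, rewritten as $|F|/|\kappa_\flat^F|\le C_s d/h_\kappa$, to replace the geometric ratio by a bound depending only on $h_\kappa$. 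This gives the face-wise estimate
\[
\norm{v}{L_2(F)}^2 \le C_s\,\frac{(p+1)(p+d)}{h_\kappa}\,\norm{v}{L_2(\kappa_\flat^F)}^2,
\]
in which the constant is independent of $|F|$ and of the total number of faces.

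Finally, I would sum over all $F\subset\partial\kappa$. Here the decisive point is that Assumption~\ref{sec5:new_assumption_no_element_faces} provides the $\{\kappa_\flat^F\}_{F\subset\partial\kappa}$ as a \emph{non-overlapping} family contained in $\kappa$, cf.~\eqref{sec5:shape_relation2}. Consequently $\sum_{F\subset\partial\kappa}\norm{v}{L_2(\kappa_\flat^F)}^2 \le \norm{v}{L_2(\kappa)}^2$, and the desired inequality \eqref{sec5:inv_est_gen_arbitrary} follows at once.

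The only delicate point in the argument, and the reason this lemma is stronger than what a naive face-by-face estimate would give, is precisely the use of the non-overlapping property in the last step; without it, the sum over faces would produce a multiplicative factor equal to the cardinality of $\partial\kappa$, defeating the goal of an inverse inequality independent of the number of faces. Beyond that, the proof is essentially a direct combination of the simplex trace inequality and the geometric bound built into Assumption~\ref{sec5:new_assumption_no_element_faces}, so no further technical obstacle is expected.
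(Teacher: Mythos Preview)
Your proposal is correct and matches the standard argument the paper relies on (the paper itself defers to \cite[Lemma~4.9]{Peter_phd} rather than writing out a proof, but the same three ingredients---Lemma~\ref{sec5:inverse_inequalties_standard_elements}, relation~\eqref{sec5:shape_relation1}, and the non-overlapping property~\eqref{sec5:shape_relation2}---are invoked in exactly this combination in the proof of Lemma~\ref{sec5:lem:coercivity}). Your identification of the non-overlapping step as the crux is exactly right.
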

Next, we introduce the \emph{harmonic polynomial space} on element $\k\in\mesh$, with polynomial degree $p$. 
\begin{equation}\label{sec5:def-harmonic}
\mathcal{H}_p(\k): = \{ u \in\mathcal{P}_{p}(\k), \Delta u = 0\}.
\end{equation}
Here, we point out that harmonic polynomials are commonly used in the context of \emph{Treffz-method}, see \cite{MR3177863,MR1939620}. It is easy to see that for $p=0,1$, $\mathcal{H}_p$ basis is identical to the $\mathcal{P}_p$ basis.

We will derive a new $H^1$-seminorm to $L_2$-norm inverse inequality for all \emph{harmonic polynomials},  based on the inequality \eqref{sec5:inv_est_gen_arbitrary}. 
\begin{lemma}\label{sec5: H1-lemmal_inv_arbitrary}
Let $\el$; then {if} Assumption \ref{sec5:new_assumption_no_element_faces} 
is satisfied,
for each $v\in\mathcal{H}_p(\k)$, the following inverse inequality holds
\begin{equation}\label{sec5: H1-inv_est_gen_arbitrary}
\norm{\nabla v}{L_2( \k)}^2 \le \Big( C_s   \frac{{(p+1)(p+d)}}{h_\k}\Big)^2 \norm{v}{L_2(\k)}^2. 
\end{equation}
Here, $C_s$ is defined in \eqref{sec5:shape_relation1}, and is
independent of $v$, $|\k|/\sup_{\k_{\flat}^F\subset \k}
|\k_{\flat}^F|$, $|F|$, and $p$.
\end{lemma}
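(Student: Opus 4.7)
The plan is to exploit the harmonicity of $v$ via Green's identity to convert the interior $H^1$-seminorm into a boundary term, and then close the loop using the trace inverse inequality \eqref{sec5:inv_est_gen_arbitrary} already established in Lemma~\ref{sec5:lemmal_inv_arbitrary}. The appearance of the square on the factor $C_s(p+1)(p+d)/h_\k$ in the target bound is the tell-tale sign that two applications of the trace inverse inequality will be required: one for $v$ itself (degree $p$) and one for its derivatives (degree $p-1$).

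Concretely, I would first integrate by parts on $\k$ to write
\begin{equation*}
\|\nabla v\|_{L_2(\k)}^2 = \int_{\partial \k} v\, (\nabla v \cdot \mbf{n}) \,\ud s \;-\; \int_\k v \, \Delta v \,\ud\uu{x},
\end{equation*}
and then use the defining property $\Delta v = 0$ of $v \in \mathcal{H}_p(\k)$ to kill the volume term. The Cauchy--Schwarz inequality on $\partial \k$, followed by $|\nabla v \cdot \mbf{n}| \le |\nabla v|$, gives
\begin{equation*}
\|\nabla v\|_{L_2(\k)}^2 \;\le\; \norm{v}{L_2(\partial \k)} \, \norm{\nabla v}{L_2(\partial \k)}.
\end{equation*}

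Next, I would apply Lemma~\ref{sec5:lemmal_inv_arbitrary} to bound $\|v\|_{L_2(\partial\k)}$ directly, and apply it componentwise to $\nabla v$ (each partial derivative $\partial_i v$ is a polynomial of degree at most $p-1$, so the constant improves slightly to $C_s\, p(p+d-1)/h_\k$, which is bounded by $C_s (p+1)(p+d)/h_\k$). This yields
\begin{equation*}
\norm{v}{L_2(\partial \k)} \norm{\nabla v}{L_2(\partial \k)} \;\le\; C_s \,\frac{(p+1)(p+d)}{h_\k}\, \norm{v}{L_2(\k)} \, \norm{\nabla v}{L_2(\k)}.
\end{equation*}
Combining with the previous display and dividing through by $\norm{\nabla v}{L_2(\k)}$ (or applying a Young's inequality if that quantity vanishes, which is a trivial case) gives \eqref{sec5: H1-inv_est_gen_arbitrary} after squaring.

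There is no serious obstacle here: the harmonicity is used in one line, and everything else is a mechanical application of Lemma~\ref{sec5:lemmal_inv_arbitrary}. The only subtlety worth noting explicitly is that the bound on $\|\nabla v\|_{L_2(\partial\k)}$ relies on $\partial_i v \in \mathcal{P}_{p-1}(\k) \subset \mathcal{P}_p(\k)$, so the same constant from \eqref{sec5:inv_est_gen_arbitrary} applies (with $p$ in place of $p-1$, producing a harmless overestimate). This explains why the exponent $2$ appears on $(p+1)(p+d)$ in the final bound, and why the method is limited to polynomial Laplacians that are themselves harmonic --- matching the remark in the introduction that $\mathcal{P}_p$ with $p=2,3$ is precisely the regime where $\Delta$ applied to the test function lands in a harmonic polynomial space.
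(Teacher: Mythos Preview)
Your proof is correct and follows essentially the same route as the paper: integrate by parts, drop the volume term via $\Delta v=0$, and bound the resulting boundary integral by two applications of the trace inverse inequality~\eqref{sec5:inv_est_gen_arbitrary}. The only cosmetic difference is that the paper closes the loop with a Young-type inequality $ab\le \tfrac{\epsilon}{2}a^2+\tfrac{1}{2\epsilon}b^2$ and then optimizes $\epsilon=(C_s(p+1)(p+d))^{-1}h_\k$, whereas you use Cauchy--Schwarz on $\partial\k$ and divide by $\|\nabla v\|_{L_2(\k)}$; both yield the identical constant.
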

\begin{proof}
We start {by using the} integration by parts formula and the fact that $\Delta v =0$, for all $v\in \mathcal{H}_p(k)$. Then, use the arithmetic--geometric mean inequality,  the  Cauchy--Schwarz inequality and $L_2$-norm of $\mbf{ n}$ is one.
$$
\norm{\nabla v}{L_2( \k)}^2 = \int_\k \nabla v \cdot \nabla v \ud \mbf{x} =  \int_{\partial \k} ({\bf n} \cdot \nabla v) v \ud{s} 
\leq \frac{\epsilon}{2}\norm{\nabla v}{L_2( \partial \k)}^2
 + \frac{1}{2\epsilon} \norm{ v}{L_2( \partial \k)}^2. 
$$
Employing the trace inverse inequality \eqref{sec5:inv_est_gen_arbitrary}, we have 
$$
\norm{\nabla v}{L_2( \k)}^2 
\leq \frac{\epsilon}{2} C_s   \frac{{(p+1)(p+d)}}{h_\k}\norm{\nabla v}{L_2(\k)}^2
 + \frac{1}{2\epsilon}  C_s   \frac{{(p+1)(p+d)}}{h_\k}\norm{ v}{L_2(\k)}^2. 
$$
By choosing $\epsilon = (C_s {{(p+1)(p+d)}})^{-1}h_\k$,  we deduce the desired result.
\end{proof}
\begin{remark}
We point out that the constant in the $H^1$-seminorm to $L_2$-norm inverse inequality \eqref{sec5: H1-inv_est_gen_arbitrary} is square of the constant in  $L_2$-norm trace inverse inequality  \eqref{sec5:inv_est_gen_arbitrary}. This bound is sharp in both $h$ and $p$ with respect to the  Sobolev index.  One direct application of this inverse inequality is to improve the result in  the multi-grid algorithm for DGFEM introduced in \cite{hp_multigrid_polytopes_2017}.  It can be proved that for DGFEM with $\mathcal{P}_1$ basis, the upper bound on the maximum eigenvalue of {the} discrete DGFEM bilinear form is independent of the number of elemental faces. It means that the smoothing step of  multi-grid algorithm   does not depend on the number of elemental faces for meshes satisfying the Assumption \ref{sec5:new_assumption_no_element_faces}.  
\end{remark}

\subsection{The stability of DGFEM} \label{sec5:stability}

In the rest of this work, we will only consider the case $p_\k=2,3$, 
$\k \in \mesh$, for  the stability analysis and  a priori error analysis. The key reason is that   the stability analysis  under the Assumption \ref{sec5:new_assumption_no_element_faces} requires the polynomial $v\in\fes$ satisfying the condition $\Delta^2 v= 0$ (Biharmonic polynomial functions). Only for $p_\k=2, 3$, $\k \in \mesh$, we know that polynomial basis $\mathcal{P}_{p_\k}$ {satisfies} the above condition.   For the consistency reason, we will still keep $p$ explicitly in the rest of {the work}.

First, we introduce the following bounded variation assumption on $\fes$. 
\begin{assumption}\label{sec5:assu-bounded}
For any $\k\in\mesh$, there exits a constant $\theta>1$, independent of all the discretization parameters, such that for any pair of elements $\k_i$ and $\k_j$ in $\mesh$ sharing a common face, {the} following bound holds
\begin{equation}\label{sec5: bounded}
\theta^{-1} \leq \Big( \frac{(p_{\k_i}+1)(p_{\k_i}+d)}{h_{\k_i}}\Big)/ \Big( \frac{(p_{\k_j}+1)(p_{\k_j}+d)}{h_{\k_j}}\Big)  \leq \theta,
\end{equation}
\end{assumption}
\begin{remark}
We point out that the above assumption is imposing bounded variation on $h$ and $p$ simultaneously. It is more general than the usual assumption which imposes the bounded variation on $h$ and $p$ separately. e.g. For $\k_i$,   $\k_j$ satisfying Assumption \ref{sec5:assu-bounded}, it is possible to have $h_{\k_i}$  much larger than  $h_{\k_j}$,  if we choose $p_{\k_i}$  also larger than  $p_{\k_j}$ based on relation \eqref{sec5: bounded}. However, this situation is not allowed under the usual bounded variation assumption.
\end{remark}
Next, we prove the coercivity and continuity for the inconsistent bilinear form $\tilde{B}(\cdot,\cdot)$ with respect to the norm $\ndg{\cdot}$, is established by the following lemma.

\begin{lemma} \label{sec5:lem:coercivity}
Given that Assumption~\ref{sec5:new_assumption_no_element_faces}, \ref{sec5:assu-bounded} hold, $p_\k =2,3$ for $\k\in \mesh$, {let 
$\sigma: \Gamma\rightarrow \mathbb{R_+}$  and $\tau: \Gamma\rightarrow \mathbb{R_+}$} to be defined facewise: 
\begin{equation}\label{sec5:eq:sigma}
\sigma(\mbf{x}) :=\left\{
\begin{array}{ll}
C_{\sigma}{\displaystyle \mean{ \Big( \frac{ ({\bf{p}}+1)({\bf{p}}+d) }{\mbf{h}} \Big)^3 } }, & ~  \uu{x}\in F \in \Gamma_{\dint}, ~F\subset\partial\k_i\cap\partial\k_j, \\
C_{\sigma}{\displaystyle  { \Big( \frac{ ({{p}_\k}+1)({{p}_\k}+d)}{{h}_\k} \Big)^3 } }, & ~  \uu{x}\in F \in \Gamma_{\ddd}, ~F\subset\partial\k.%\cap \partial\Omega_{\ddd}.
\end{array}
\right. 
\end{equation}
and 
\begin{equation}\label{sec5:eq:tau}
\tau(\mbf{x}) :=\left\{
\begin{array}{ll}
C_{\tau}{\displaystyle \mean{  \frac{({\bf{p}}+1)({\bf{p}}+d)}{\mbf{h}}  } }, & ~  \uu{x}\in F \in \Gamma_{\dint}, ~F\subset\partial\k_i\cap\partial\k_j, \\
C_{\tau}{\displaystyle  {  \frac{ ({{p}_\k}+1)({{p}_k}+d)}{{h}_\k} }  }, & ~  \uu{x}\in F \in \Gamma_{\ddd}, ~F\subset\partial\k.%\cap \partial\Omega_{\ddd}.
\end{array}
\right.
\end{equation}
Where $C_\sigma$ and  $C_\tau$ are  sufficiently large positive
constants. Then the bilinear form $\tilde{B}(\cdot,\cdot)$ is coercive and continuous over $\mathcal{S}\times \mathcal{S}$, i.e.,
\begin{equation}\label{coer}
\tilde{B}(v,v) \geq C_{\rm {coer}} \ndg{v}^2 \quad\text{for all}\quad v\in \mathcal{S},
\end{equation}
and
\begin{equation}\label{cont}
\tilde{B}(w,v)\leq C_{\rm cont}\ndg{w}\,\ndg{v} \quad\text{for all}\quad w,v\in \mathcal{S},
\end{equation}
respectively,
where $C_{\rm coer}$ and $C_{\rm cont}$ are positive constants, independent of the local
mesh sizes $h_\k$, local polynomial degree orders ${p_\k}$, $\el$, measure of faces  and number of elemental faces.
\end{lemma}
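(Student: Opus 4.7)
The plan is to follow the overall structure of Lemma~\ref{sec4:lem:coercivity}, starting from the identity
\begin{equation*}
\tilde{B}(v,v) = \ndg{v}^2 + 2\int_{\Gamma}\mean{\nabla \ip(\Delta v)} \cdot \jump{v}\,\ud s - 2\int_{\Gamma}\mean{\ip(\Delta v)} \jump{\nabla v}\,\ud s,
\end{equation*}
and then absorbing the two consistency integrals into $\ndg{v}^2$ via Cauchy--Schwarz and the arithmetic--geometric mean. The decisive structural change compared with Section~\ref{bounded_faces} is that, since the cardinality $C_\k$ of faces of $\k$ may be arbitrarily large, I cannot afford any face-wise estimate that introduces a factor of $C_\k$. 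I will therefore carry out every trace estimate \emph{element-wise on the whole boundary} $\partial \k$, using Lemma~\ref{sec5:lemmal_inv_arbitrary} directly, and use the bounded-variation Assumption~\ref{sec5:assu-bounded} to replace the face-wise values of $\sigma$, $\tau$ on each interior face by their element-wise counterparts up to constants independent of the discretization parameters.

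The new ingredient is the observation that, for $p_\k\in\{2,3\}$ and $v|_\k\in\mathcal{P}_{p_\k}(\k)$, the Laplacian $\Delta v|_\k$ is a polynomial of degree at most $p_\k-2\le 1$, hence automatically \emph{harmonic} and already in $S_{\mesh}^{{\bf p}-2}$; consequently $\ip(\Delta v)|_\k = \Delta v|_\k \in \mathcal{H}_{p_\k-2}(\k)$. This is precisely what unlocks Lemma~\ref{sec5: H1-lemmal_inv_arbitrary} on $\nabla \ip(\Delta v)$. Applying Cauchy--Schwarz and the arithmetic--geometric mean face-wise, together with the bounded variation assumption, I expect to reach an estimate of the form
\begin{equation*}
\Big|\int_{\Gamma}\mean{\nabla\ip(\Delta v)}\cdot \jump{v}\,\ud s\Big|
\;\le\; \epsilon \sum_{\el} \ltwo{\sqrt{\sigma^{-1}}\,\nabla \ip(\Delta v)}{L_2(\partial \k)}^2 + \frac{1}{4\epsilon}\int_{\Gamma}\sigma\,|\jump{v}|^2\,\ud s.
\end{equation*}
On each element $\k$ I then chain three inequalities: first Lemma~\ref{sec5:lemmal_inv_arbitrary} to pull the norm from $\partial\k$ into $\k$, contributing one factor of $(p+1)(p+d)/h_\k$; next Lemma~\ref{sec5: H1-lemmal_inv_arbitrary} to trade $\nabla \ip(\Delta v)$ for $\ip(\Delta v)$, contributing two further factors of $(p+1)(p+d)/h_\k$; and finally $L_2$-stability of $\ip$ to replace $\ip(\Delta v)$ by $\Delta v$. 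The cubic scaling of $\sigma$ in \eqref{sec5:eq:sigma} is chosen exactly so that these three factors cancel against $\sigma^{-1}$, leaving a clean bound $(\epsilon/C_\sigma)\sum_\k\ltwo{\Delta v}{L_2(\k)}^2$.

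The second consistency integral is handled analogously but is one step shorter: I apply Lemma~\ref{sec5:lemmal_inv_arbitrary} to $\ip(\Delta v)$ directly (no harmonic inverse inequality is needed) and then $L_2$-stability of $\ip$, which produces one factor of $(p+1)(p+d)/h_\k$ and therefore matches the linear scaling of $\tau$ in \eqref{sec5:eq:tau}, yielding $(\epsilon/C_\tau)\sum_\k \ltwo{\Delta v}{L_2(\k)}^2$. Assembling everything produces
\begin{equation*}
\tilde{B}(v,v) \ge \Big(1-\tfrac{2\epsilon}{C_\sigma}-\tfrac{2\epsilon}{C_\tau}\Big)\sum_{\el}\ltwo{\Delta v}{L_2(\k)}^2 + \Big(1-\tfrac{1}{2\epsilon}\Big)\int_{\Gamma}\big(\sigma|\jump{v}|^2+\tau|\jump{\nabla v}|^2\big)\,\ud s,
\end{equation*}
and choosing $\epsilon>1/2$ together with $C_\sigma, C_\tau$ sufficiently large gives \eqref{coer}. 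Continuity \eqref{cont} then follows by the same sequence of Cauchy--Schwarz and inverse estimates applied to $\tilde{B}(w,v)$, with no need for any $\epsilon$-splitting.

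The step I expect to be the main obstacle is the precise accounting of the $h_\k$- and $(p+1)(p+d)$-powers in the $\sigma$-weighted bound: the cubic scaling in \eqref{sec5:eq:sigma} must line up exactly with one application of the trace inverse inequality plus one application of the harmonic inverse inequality, while remaining stable under the face-to-element replacement permitted by Assumption~\ref{sec5:assu-bounded}. Verifying this cancellation is the technical heart of the argument, and it is also the reason why the restriction $p_\k\in\{2,3\}$ is unavoidable: for higher $p_\k$, $\Delta v$ is no longer harmonic and Lemma~\ref{sec5: H1-lemmal_inv_arbitrary} cannot be invoked.
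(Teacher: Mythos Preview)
Your proposal is correct and follows essentially the same route as the paper: start from the identity~\eqref{sec4:coercivity-1}, bound the two consistency integrals via Cauchy--Schwarz and the arithmetic--geometric mean, then on each element chain the trace inverse inequality of Lemma~\ref{sec5:lemmal_inv_arbitrary} with the harmonic $H^1$--$L_2$ inverse inequality of Lemma~\ref{sec5: H1-lemmal_inv_arbitrary} (one extra trace step for the $\sigma$-term, none for the $\tau$-term), and finally invoke Assumption~\ref{sec5:assu-bounded} so that the face-wise penalty can be replaced by the element-wise scaling and the cubic/linear powers of $(p+1)(p+d)/h_\k$ cancel against $\sigma^{-1}$ and $\tau^{-1}$, respectively. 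Your observation that $\ip(\Delta v)|_\k = \Delta v|_\k \in \mathcal{H}_{p_\k-2}(\k)$ for $p_\k\in\{2,3\}$ is exactly the justification the paper relies on implicitly; the only cosmetic discrepancy is that the final coercivity constants carry explicit factors $(C_s\theta)^3$ and $C_s\theta$ in front of $1/C_\sigma$ and $1/C_\tau$, which you have suppressed but which are harmless since $C_\sigma, C_\tau$ are taken sufficiently large.
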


\begin{proof}
Recalling the second term on the right-hand side of \eqref{sec4:coercivity-1} in the proof of Lemma \ref{sec4:lem:coercivity}, upon application of the  Cauchy--Schwarz inequality,  the arithmetic--geometric mean inequality, inverse inequality \eqref{sec5:inv_est_classical}, relation \eqref{sec5:shape_relation1}  and \eqref{sec5:shape_relation2}, {we} deduce that
\begin{equation*} 
\begin{aligned}
 \int_{\Gamma}\mean{\nabla \ip (\Delta v)} \cdot \jump{v} \ud s 
 &\leq 
\epsilon \su \sum_{F \subset \partial \k}\norm{{\sqrt{\sigma^{-1}}}  \nabla \ip(\Delta v)}{L_2(F)}^2    
 + \frac{1}{4\epsilon} \int_{\Gamma}{\sigma|\jump{v}|}^2 \ud{s} \\ 
 & \leq 
 \epsilon \su \sum_{F \subset \partial \k}
\sigma^{-1}  C_s   \frac{{(p_\k+1)(p_\k+d)}}{h_\k}
\norm{  \nabla \ip(\Delta v)}{L_2(\k^F_\flat)}^2    
 + \frac{1}{4\epsilon} \int_{\Gamma}{\sigma|\jump{v}|}^2 \ud{s} \\
& \leq 
 \epsilon \su (\max_{F \subset \partial \k}
\sigma^{-1})  C_s   \frac{{(p_\k+1)(p_\k+d)}}{h_\k}
\norm{  \nabla \ip(\Delta v)}{L_2(\k)}^2    
 + \frac{1}{4\epsilon} \int_{\Gamma}{\sigma|\jump{v}|}^2 \ud{s} .
\end{aligned}
\end{equation*}
Using the inverse inequality \eqref{sec5: H1-inv_est_gen_arbitrary}
 stated in Lemma~\ref{sec5: H1-lemmal_inv_arbitrary}, Assumption \ref{sec5:assu-bounded},  the stability of the $L_2$-projector $\ip$ in the $L_2$-norm together with the Definition of $\sigma$ in { \eqref{sec5:eq:sigma}} we deduce
\begin{equation}  \label{sec5:coer-relation1}
\begin{aligned}
 \int_{\Gamma}\mean{\nabla \ip (\Delta v)} \cdot \jump{v} \ud s  
&\leq
 \epsilon \su (\max_{F \subset \partial \k}
\sigma^{-1})  \Big( C_s   \frac{{(p_\k+1)(p_\k+d)}}{h_\k} \Big)^3
\norm{ \ip(\Delta v)}{L_2(\k)}^2    
 + \frac{1}{4\epsilon} \int_{\Gamma}{\sigma|\jump{v}|}^2 \ud{s}  \\
& \leq 
   \frac{\epsilon (C_s \theta)^3 }{C_\sigma}
\su \norm{ \Delta v}{L_2(\k)}^2    
 + \frac{1}{4\epsilon} \int_{\Gamma}{\sigma|\jump{v}|}^2 \ud{s}.
  \end{aligned}
\end{equation}
Next, we  bound the last term on the right-hand side of \eqref{sec4:coercivity-1}. We point out that only using the trace inverse inequality \eqref{sec5:inv_est_classical} in Lemma~\ref{sec5:inverse_inequalties_standard_elements} and Definition of $\tau$ in \eqref{sec5:eq:tau}, the following relation {holds:  }
\begin{equation} \label{sec5:coer-relation2}
\begin{aligned}
 \int_{\Gamma}\mean{ \ip (\Delta v)}  \jump{\nabla v} \ud s  
 & \leq \frac{\epsilon C_s \theta}{C_\tau}  \su \norm{ \Delta v}{L_2(\k)}^2    
 + \frac{1}{4\epsilon} \int_{\Gamma}{\tau|\jump{\nabla v}|}^2 \ud{s}. 
  \end{aligned}
\end{equation}
Inserting { \eqref{sec5:coer-relation1} and \eqref{sec5:coer-relation2} into \eqref{sec4:coercivity-1}}, we deduce that 
$$
\tilde{B}(v,v)
\geq 
\Big(1 - \frac{2\epsilon (C_s \theta)^3 }{C_\sigma}  -  \frac{2\epsilon C_s \theta}{C_\tau}
\Big) 
\sum_{\el} \ltwo{\Delta v}{L_2(\k)} ^2 
+
\Big(1 -\frac{1}{2\epsilon} \Big) 
\int_{\Gamma}  \Big( \sigma |\jump{v}|^2 
+
\tau |\jump{\nabla v}|^2 \Big)\ud{s},
$$
because the constant $C_s$ and $\theta$ are bounded by Assumption \ref{sec5:new_assumption_no_element_faces} and \ref{sec5:assu-bounded}, respectively. So the bilinear form $\tilde{B}(\cdot,\cdot)$ is coercive over $\mathcal{S} \times \mathcal{S}$, if $C_\sigma > 4\epsilon (C_s \theta)^3$, $C_\tau > 4\epsilon C_s \theta$ and $\epsilon>\frac{1}{2}$.  The proof of continuity follows immediately.  
\end{proof}
\begin{remark}
We point out that it is possible to prove  Lemma \ref{sec5:lem:coercivity} without using  Assumption \ref{sec5:assu-bounded}. The technical difficulty is that in order to use the inverse inequality \eqref{sec5: H1-inv_est_gen_arbitrary} stated in Lemma~\ref{sec5: H1-lemmal_inv_arbitrary},   it is essential to take out the face-wise penalization function $\sigma$ out of the summation over all the element boundary $\partial \k$, which means that the information of $\sigma$ on $F$, $F\subset \partial \k$, is coupled  over all the element boundary.  It implies that the definition of the function $\sigma$ on face $F$, shared by $\k_i$ and $\k_j$,  depends not only on the discretization parameters of $\k_i$, $\k_j$ but also on  all of their neighbouring elements.  This  definition {may be} not practical under the mesh Assumption~\ref{sec5:new_assumption_no_element_faces}, which {allows} the number of  neighbouring elements to be arbitrary.
\end{remark}

\subsection{A priori error analysis} \label{sec5:A priori}
Before deriving  the a priori error bound for the 
DGFEM~\eqref{galerkin_dg} under the  
Assumption~\ref{sec5:new_assumption_no_element_faces}, we recall  the approximation result for function $u$ on the element boundary.

\begin{lemma}\label{sec5:lemma_approx_polytopes_arbitrary}
Let $\el$ and 
$\mathcal{K}\in\coveringmesh$ the corresponding simplex 
such that $\k\subset\mathcal{K}$, satisfying the Definition~\ref{sec4:definition_mesh_covering}.
Suppose that $v\in H^1(\Omega)$ is such that 
$\frak{E}v|_{\mathcal{K}}\in H^{l_{\k}}(\mathcal{K})$. Then, given that
Assumption \ref{sec5:new_assumption_no_element_faces} is satisfied, the
following bound holds {
\begin{equation}\label{sec5:approx-inf_k-arbitrary}
\norm{v - \tilde{\Pi}_p v}{L_2(\partial \k)}
\le C_{4}\frac{h_{\k}^{s_{\k}-1/2}}{p^{l_{\k}-1/2}} 
\norm{{\cal E} v}{H^{l_\k}(\mathcal{K})}, \quad l_\k\ge 1/2,%\\
\end{equation}
and
\begin{equation}\label{sec5:approx-inf_k-arbitrary_grad}
\norm{\nabla(v - \tilde{\Pi}_p v)}{L_2(\partial \k)}
\le C_{5}\frac{h_{\k}^{s_{\k}-3/2}}{p^{l_{\k}-3/2}} 
\norm{{\cal E} v}{H^{l_\k}(\mathcal{K})}, \quad l_\k\ge 3/2,%\\
\end{equation}}
where
$s_{\k}=\min\{p+1, l_{\k}\}$, $C_{4}$ and $C_{5}$
are  positive constants depending on $C_s$ from \eqref{sec5:shape_relation1} and 
the shape-regularity of ${\mathcal{K}}$, but is independent of $v$, $h_{\k}$, $p$, and number of faces per element.
\end{lemma}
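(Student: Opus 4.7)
The plan is to reduce the trace estimate on the polytopic boundary $\partial\k$ to interior estimates on $\k$, using the sub-simplices $\k^F_\flat \subset \k$ guaranteed by Assumption~\ref{sec5:new_assumption_no_element_faces}. On each such simplex a multiplicative trace inequality provides, for any $w \in H^1(\k^F_\flat)$,
\[
\|w\|_{L_2(F)}^2 \le C\left(\frac{|F|}{|\k^F_\flat|}\|w\|_{L_2(\k^F_\flat)}^2 + \|w\|_{L_2(\k^F_\flat)}\|\nabla w\|_{L_2(\k^F_\flat)}\right),
\]
and Assumption~\ref{sec5:new_assumption_no_element_faces} converts the geometric prefactor into $|F|/|\k^F_\flat| \le C_s d / h_\k$, removing the potentially small measure of $F$ from the bound.

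Summing over $F \subset \partial \k$, using that $\{\k^F_\flat\}_{F\subset\partial\k}$ is non-overlapping and contained in $\k$ via \eqref{sec5:shape_relation2}, together with the Cauchy--Schwarz estimate $\sum_{F} \|w\|_{L_2(\k^F_\flat)}\|\nabla w\|_{L_2(\k^F_\flat)} \le \|w\|_{L_2(\k)}\|\nabla w\|_{L_2(\k)}$, I would obtain
\[
\|w\|_{L_2(\partial \k)}^2 \le C\left(h_\k^{-1}\|w\|_{L_2(\k)}^2 + \|w\|_{L_2(\k)}\|\nabla w\|_{L_2(\k)}\right),
\]
with a constant $C = C(C_s)$ that is crucially independent of the number of faces of $\k$. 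Setting $w := v - \tilde{\Pi}_p v$ and inserting \eqref{sec4:approxH_k} with $q = 0$ and $q = 1$, the two terms become of order $h_\k^{-1}\cdot h_\k^{2s_\k}/p^{2l_\k}$ and $(h_\k^{s_\k}/p^{l_\k})\cdot(h_\k^{s_\k-1}/p^{l_\k-1})$; the second dominates, and after extracting the square root this yields \eqref{sec5:approx-inf_k-arbitrary}. The gradient bound \eqref{sec5:approx-inf_k-arbitrary_grad} follows by the identical argument applied componentwise to $\nabla(v - \tilde{\Pi}_p v)$, now invoking \eqref{sec4:approxH_k} with $q = 1$ and $q = 2$, which is what forces the stronger regularity hypothesis $l_\k \ge 3/2$.

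The main subtlety, rather than a genuine obstacle, lies in the use of the \emph{multiplicative} form of the trace inequality on the possibly anisotropic simplex $\k^F_\flat$: the familiar additive version $\|w\|_{L_2(F)}^2 \le C(h^{-1}\|w\|_{L_2(T)}^2 + h\|\nabla w\|_{L_2(T)}^2)$ would degrade the $p$-exponent from $l_\k - 1/2$ to $l_\k - 1$, losing the sharp $p^{1/2}$ factor that is essential in the forthcoming error analysis. Establishing the multiplicative trace inequality on $\k^F_\flat$ with a constant depending only on $C_s$ (and \emph{not} on the shape-regularity of $\k^F_\flat$, which may fail) requires a slicing argument aligned with the height direction $d|\k^F_\flat|/|F|$ opposite $F$, rather than the usual reference-to-physical scaling.
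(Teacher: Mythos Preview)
Your argument is correct and is precisely the one the paper defers to via the citation of \cite[Lemma~33]{DGpolybook}: the multiplicative trace inequality on each sub-simplex $\k^F_\flat$, the geometric replacement $|F|/|\k^F_\flat|\le C_s d/h_\k$ from Assumption~\ref{sec5:new_assumption_no_element_faces}, and the non-overlapping summation are exactly the ingredients used there. Your final remark on the anisotropy of $\k^F_\flat$ is also handled as you suggest---the divergence-theorem/slicing derivation of the trace inequality produces a constant proportional to $\diam(\k^F_\flat)/h_F$ with $h_F=d|\k^F_\flat|/|F|$, and since $\diam(\k^F_\flat)\le h_\k\le C_s h_F$ this ratio is bounded by $C_s$, so no shape-regularity of $\k^F_\flat$ is needed.
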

\begin{proof}
{
The proof for  relation \eqref{sec5:approx-inf_k-arbitrary}  can be found  in \cite[Lemma $33$]{DGpolybook}.  The relation \eqref{sec5:approx-inf_k-arbitrary_grad} can be proven in the similar fashion. }
\end{proof}

Next, we derive the a priori error bound with help of above the Lemma.

\begin{theorem} \label{sec5:thm:apriori}
Let $\mesh=\{\k\}$ be a subdivision of $\Omega\subset\mathbb{R}^d$, $d=2,3$, 
consisting of general polytopic elements satisfying 
Assumptions~\ref{sec4:assumption_overlap},  \ref{sec5:new_assumption_no_element_faces} and \ref{sec5:assu-bounded}, 
with $\coveringmesh=\{\mathcal{K}\}$ an associated covering of $\mesh$ consisting of shape-regular $d$--simplices, cf. Definition~\ref{sec4:definition_mesh_covering}.
Let $u_h\in \fes$, with $p_\k  = 2, 3$ for all $\k\in\mesh$, be the corresponding DGFEM solution
defined by \eqref{galerkin_dg}, where the discontinuity-penalization function $\sigma$ and $\tau$ are given by~\eqref{sec5:eq:sigma}  and~\eqref{sec5:eq:tau}, respectively.
If the analytical solution $u\in H^2(\Omega)$  satisfies $u|_\k \in H^{l_\k}(\k)$, $l_\k>7/2$, for each $\k \in \mesh$, such that 
$\frak{E}u|_{\mathcal{K}}\in H^{l_\k}(\mathcal{K})$, where $\mathcal{K}\in\coveringmesh$ 
with $\k\subset\mathcal{K}$, then
\begin{equation}\label{sec5:final-error}
\ndg{u-u_{h}}^2 \le C\su \frac{h_{\k}^{2(s_{\k}-2)}}{p_{\k}^{2(l_{\k}-2)}} 
\left(
{\cal G}_\k(h_\k,p_\k)
+{\cal D}_\k(h_\k,p_\k)
\right)
\|{\frak E} u\|_{H^{l_{\k}}(\mathcal{K})}^2,
\end{equation}
with $s_{\k}=\min\{p_{\k}+1,l_\k\}$, 
\begin{equation}
\begin{aligned} \label{sec5:con-error}
{\cal G}_\k(h_\k,p_\k) 
:=
1 + \frac{h_\k^{3}}{p_\k^{3} }
\Big( \max_{F\subset \partial \k  }\sigma|_F \Big)
 +\frac{h_\k}{p_\k}
\Big( \max_{F\subset \partial \k  }\tau|_F \Big).  
\end{aligned}
\end{equation}
and 
\begin{equation}
\begin{aligned} \label{sec5:incon-error}
{\cal D}_\k(h_\k,p_\k) 
&:= \frac{h_\k^{-3}}{p_\k^{-3}} 
\Big( \max_{F\subset \partial \k  }\sigma^{-1}|_F \Big)
+
\frac{ h_{\k}^{-3}}{p^{-6}_{\k}}
\Big( \max_{F\subset \partial \k  }\sigma^{-1}|_F \Big)
\\
& \hspace{-0cm}+ \frac{h_\k^{-1}}{p_\k^{-1}} 
\Big( \max_{F\subset \partial \k  }\tau^{-1}|_F \Big)
+
\frac{h^{-1}}{p^{-2}_{\k}}
\Big( \max_{F\subset \partial \k  }\tau^{-1}|_F \Big)
\end{aligned}
\end{equation}
where $C$ is a positive constant, which depends on the shape-regularity
of $\coveringmesh$ and $C_s$, but is independent of the 
discretization parameters and number of elemental faces.
\end{theorem}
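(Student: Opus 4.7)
The plan is to mirror the structure of the proof of Theorem~\ref{sec4:thm:apriori}, but to replace each face-wise tool with its counterpart adapted to the arbitrary-face setting. The starting point is the abstract Strang-type bound of Lemma~\ref{sec4:strang}, which splits $\ndg{u-u_h}$ into a best-approximation term and a residual (inconsistency) term. Both terms will be estimated with respect to the projection $\tilde{\Pi}_{\bf p} u$ introduced in Lemma~\ref{sec4:lemma_approx_polytopes}, and the coercivity/continuity constants are furnished by Lemma~\ref{sec5:lem:coercivity}.

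For the best-approximation term, I would write
\[
\ndg{u-\tilde{\Pi}_{\bf p} u}^2
\leq \sum_{\el}\Big(\|\Delta(u-\tilde{\Pi}_{p_\k}u)\|_{L_2(\k)}^2
+ \|\sqrt{\sigma}(u-\tilde{\Pi}_{p_\k}u)\|_{L_2(\partial\k)}^2
+ \|\sqrt{\tau}\nabla(u-\tilde{\Pi}_{p_\k}u)\|_{L_2(\partial\k)}^2\Big),
\]
and estimate the interior $\Delta$-term using~\eqref{sec4:approxH_k} with $q=2$, while applying the new boundary approximation estimates~\eqref{sec5:approx-inf_k-arbitrary} and~\eqref{sec5:approx-inf_k-arbitrary_grad} from Lemma~\ref{sec5:lemma_approx_polytopes_arbitrary} to the trace terms. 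After pulling out the face-wise maxima of $\sigma$ and $\tau$, this gives exactly the quantity ${\cal G}_\k(h_\k,p_\k)$ in~\eqref{sec5:con-error}.

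For the residual term, following~\eqref{sec4:incon-general}, I would bound
\[
|\tilde{B}(u,w_h)-\tilde{\ell}(w_h)|
\leq \Big(\int_\Gamma \sigma^{-1}|\mean{\nabla\Delta u-\nabla\ip(\Delta u)}|^2
+ \tau^{-1}|\mean{\Delta u-\ip(\Delta u)}|^2\,\ud s\Big)^{1/2}\ndg{w_h},
\]
then add and subtract $\nabla\Delta\tilde{\Pi}_{\bf p} u$ and $\Delta\tilde{\Pi}_{\bf p} u$ respectively, producing four pieces $\text{I}_1,\text{I}_2,\text{I}_3,\text{I}_4$ analogous to those in Theorem~\ref{sec4:thm:apriori}. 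The ``consistency'' pieces $\text{I}_1,\text{I}_3$ (no projector involved) are controlled purely by the approximation results~\eqref{sec5:approx-inf_k-arbitrary_grad} and~\eqref{sec5:approx-inf_k-arbitrary}, yielding the first and third summands in ${\cal D}_\k$.

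The crux of the argument — and the step that genuinely requires the restriction $p_\k=2,3$ — is the bound on the projected pieces $\text{I}_2,\text{I}_4$. Since $\ip:L_2\to S^{{\bf p}-2}_\mesh$ and $p_\k\in\{2,3\}$, the image $\ip\bigl(\Delta\tilde{\Pi}_{\bf p}u-\Delta u\bigr)|_\k$ lies in $\mathcal{P}_{p_\k-2}(\k)\subset\mathcal{H}_{p_\k-2}(\k)$, because polynomials of degree $\leq 1$ are automatically harmonic. This is exactly what allows me to invoke the new harmonic-polynomial inverse estimate~\eqref{sec5: H1-inv_est_gen_arbitrary} of Lemma~\ref{sec5: H1-lemmal_inv_arbitrary} to trade gradients for $L_2$-norms on $\k$, and the trace inverse inequality~\eqref{sec5:inv_est_gen_arbitrary} of Lemma~\ref{sec5:lemmal_inv_arbitrary} to trade boundary $L_2$-norms for interior $L_2$-norms — both of which have constants independent of the number of faces. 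Using the $L_2$-stability of $\ip$ and the interior approximation bound~\eqref{sec4:approxH_k} applied to $\Delta u$, together with the bounded-variation Assumption~\ref{sec5:assu-bounded} to move $\sigma,\tau$ across neighbouring elements, I obtain the second and fourth summands in ${\cal D}_\k$. Collecting~$\text{I}_1,\dots,\text{I}_4$, dividing by $\ndg{w_h}$, and combining with the best-approximation estimate yields~\eqref{sec5:final-error}.

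The main obstacle is precisely the $\text{I}_2$ estimate, since this is where the usual $H^1$-to-$L_2$ inverse inequality fails on polytopes with arbitrary numbers of faces; everything else is a careful but standard repackaging of the proof of Theorem~\ref{sec4:thm:apriori} with the face-independent estimates of Section~\ref{sec5:inverse}. The restriction to $p_\k\leq 3$ enters only through this step, and any extension to higher $p$ would require a harmonic-polynomial inverse inequality valid for $\mathcal{H}_{p-2}$ with $p-2\geq 2$.
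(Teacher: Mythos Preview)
Your proposal is correct and follows essentially the same route as the paper's own proof, which explicitly says it ``follows the same way of proof for Theorem~\ref{sec4:thm:apriori}'' and invokes Lemmas~\ref{sec5:lemma_approx_polytopes_arbitrary}, \ref{sec5:inverse_inequalties_standard_elements}, and~\ref{sec5: H1-lemmal_inv_arbitrary} for the face terms and the inconsistency error. Your identification of the $\text{I}_2$ step --- where $\ip(\Delta\tilde{\Pi}_{\bf p}u-\Delta u)|_\k\in\mathcal{P}_{p_\k-2}(\k)=\mathcal{H}_{p_\k-2}(\k)$ for $p_\k=2,3$ enables the harmonic inverse estimate~\eqref{sec5: H1-inv_est_gen_arbitrary} --- is precisely the mechanism the paper relies on, and your accounting of the four summands in ${\cal D}_\k$ matches the intended derivation.
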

\begin{proof}
The proof of the error bound follows the same way of proof for Theorem \ref{sec4:thm:apriori}.  For brevity, we  focus on the terms defined on the faces of the elements in the computational mesh only. Thereby,
employing \eqref{sec5:approx-inf_k-arbitrary} in 
Lemma~\ref{sec5:lemma_approx_polytopes_arbitrary}, we deduce that
\begin{equation} 
\begin{aligned}
\int_{\Gamma}\sigma \jump{v - \tilde{\Pi}_p v}^2\ud s  
&\leq 2 \su \sum_{F\subset \partial \k } \sigma|_F \norm{v - \tilde{\Pi}_p v}{L_2(F)}^2  \\
&\leq 2 \su \Big(\max_{F\subset \partial \k}\sigma|_F \Big) \norm{v - \tilde{\Pi}_p v}{L_2(\partial \k)}^2 
  \leq C \su \Big( \max_{F\subset \partial \k  }\sigma|_F \Big) \frac{h_{\k}^{2s_{\k}-1}}{p^{2l_{\k}-1}} 
\norm{{\cal E} v}{H^{l_\k}(\mathcal{K})}^2.
\end{aligned}
\end{equation}
 With help of Lemma \ref{sec5:lemma_approx_polytopes_arbitrary}, \ref{sec5:inverse_inequalties_standard_elements} and  \ref{sec5: H1-lemmal_inv_arbitrary}, the inconsistent error is derived in the similar way,
\end{proof}

\begin{remark}
{We point out that  the regularity requirements for trace approximation result in Lemma \ref{sec5:lemma_approx_polytopes_arbitrary} is lower compared to the trace approximation result  in Lemma \ref{sec4:lemma_approx_polytopes}, which requires the control of the $L_\infty$-norm of the function.}  Similarly, by using the Assumption \ref{sec5:assu-bounded}, 
for uniform orders $p_\k =2,3$,  $h=\max_{\k\in\mesh} h_\k$, $s_\k=s$,
$s=\min\{p+1,l\}$, $l>7/2$, then the bound 
of Theorem~\ref{sec5:thm:apriori} reduces to
\begin{equation*}
\ndg{u-u_{h}}  \leq C \frac{h^{s-2}}{p^{l-\frac{7}{2}}} \| u\|_{H^{l}(\Omega)}.
\end{equation*}
The constant $C$ depends on the constant $C_s$ and $\theta$, but independent of the measure of the elemental faces and the number of elemental faces.
\end{remark}

\section{Numerical Examples}\label{numerical example}

In this section, we present a series of numerical examples to illustrate the a priori error estimates derived in this work. { Throughout this section the DGFEM solution $u_h$ defined by \eqref{galerkin_dg} is computed.  The constants $C_\sigma$ and $C_\tau $, appearing in the discontinuity penalization functions  $\sigma$ and $\tau$,  respectively, are both equal to $10$.}  %We mention that the convergence rate in the following examples are computed by using the slope of the convergence line.

\subsection{Example 1}
In this example, let $\Omega:=(0,1)^2$  and select $f$, $g_{\rm D}$ and $g_{\rm N}$ such that the analytical solution to \eqref{pde} and \eqref{bcs} is give as 
$$
u(x,y) = \sin(\pi x)^2  \sin(\pi y)^2.
$$
The polygonal meshes used in this example are generated through the PolyMesher MATLAB library \cite{polymesher}. In general, these polygonal meshes contain no more than $8$ edges, which {satisfy}  Assumption \ref{sec4:assumption_no_element_faces}.  Typical meshes generated by PolyMesher are shown in Figure \ref{ex1:mesh_figure}.

\begin{figure}[!h]
\centering
\subcaptionbox{$\mathcal T$ with $64$
polygons.}{
\includegraphics[scale=0.3]{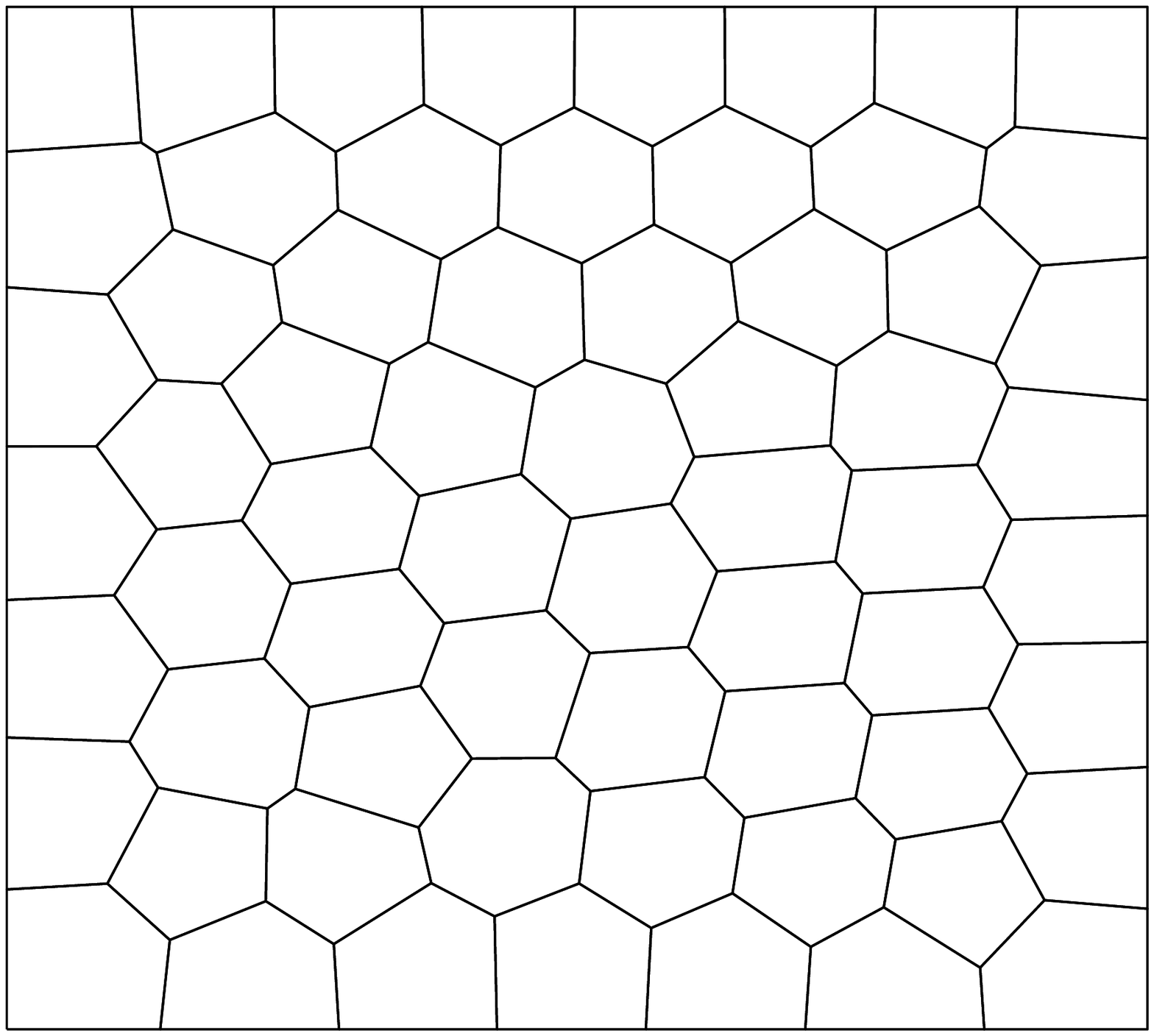}
}
\hspace{0.5cm}
\subcaptionbox{$\mathcal T$ with $256$
polygons.}{
\includegraphics[scale=0.3]{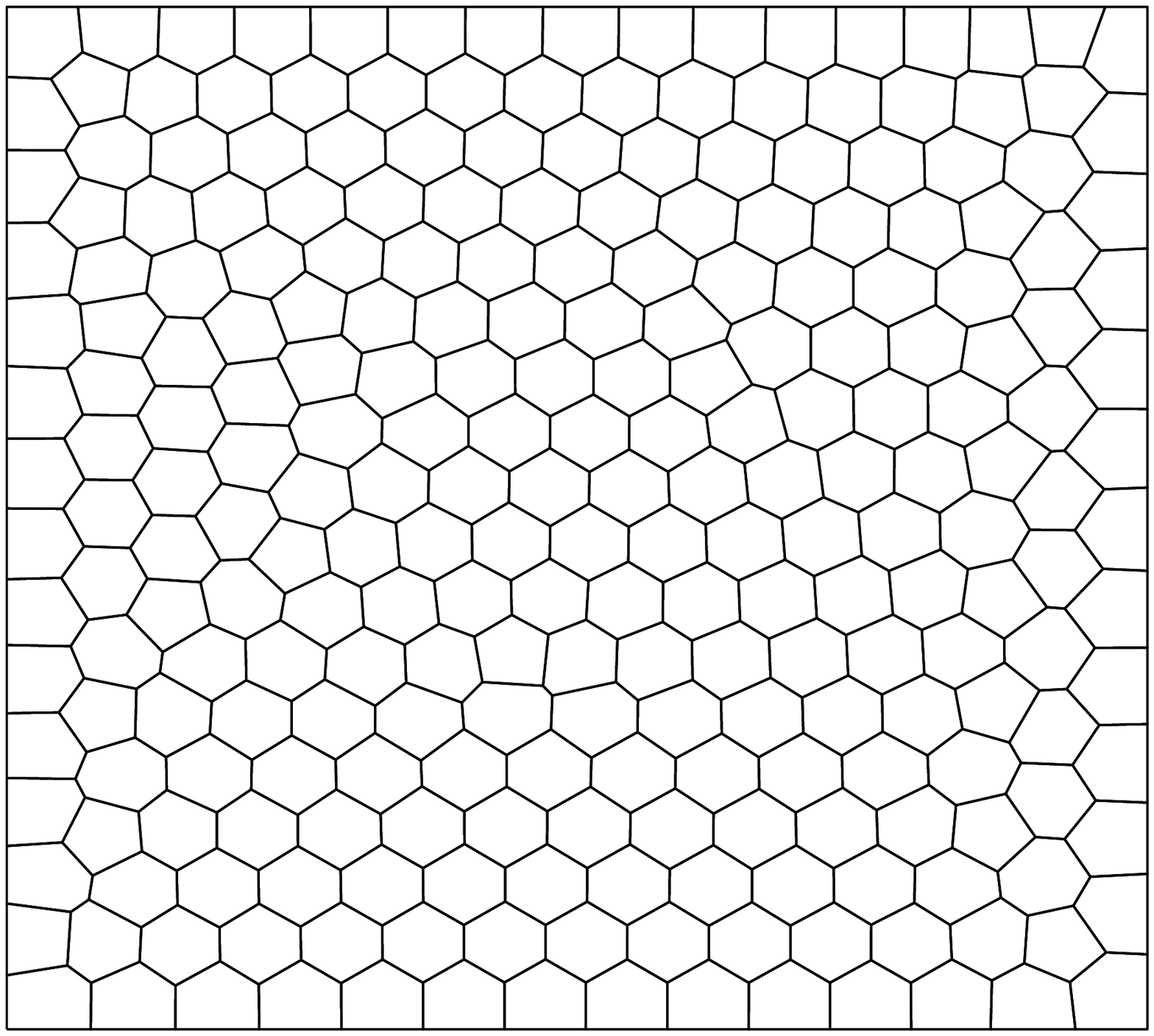} 
}
\caption{
\label{ex1:mesh_figure}
An example of a polytopic mesh $\mathcal T$ with bounded number of elemental faces.}
\end{figure}

\begin{figure}[!ht]
\centering
%\subcaptionbox{$\norm{u-u_h}{L_2(\Omega)}$}{
\includegraphics[scale=0.38]{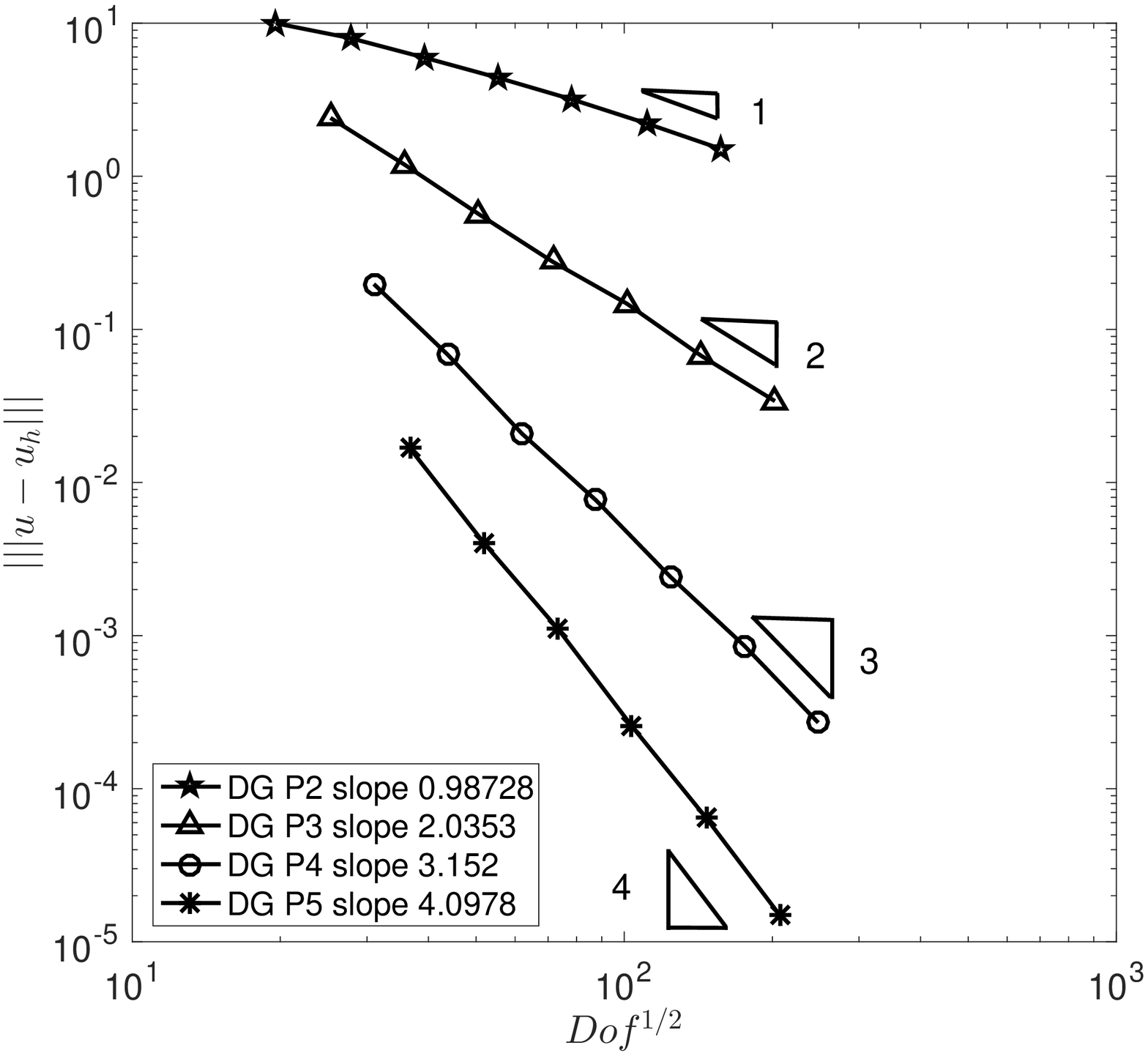} 
%}
\hspace{0cm}
%\subcaptionbox{ $\norm{u-\mathcal{E}(u_h)}{L_2(\Omega)}$}{
\includegraphics[scale=0.38]{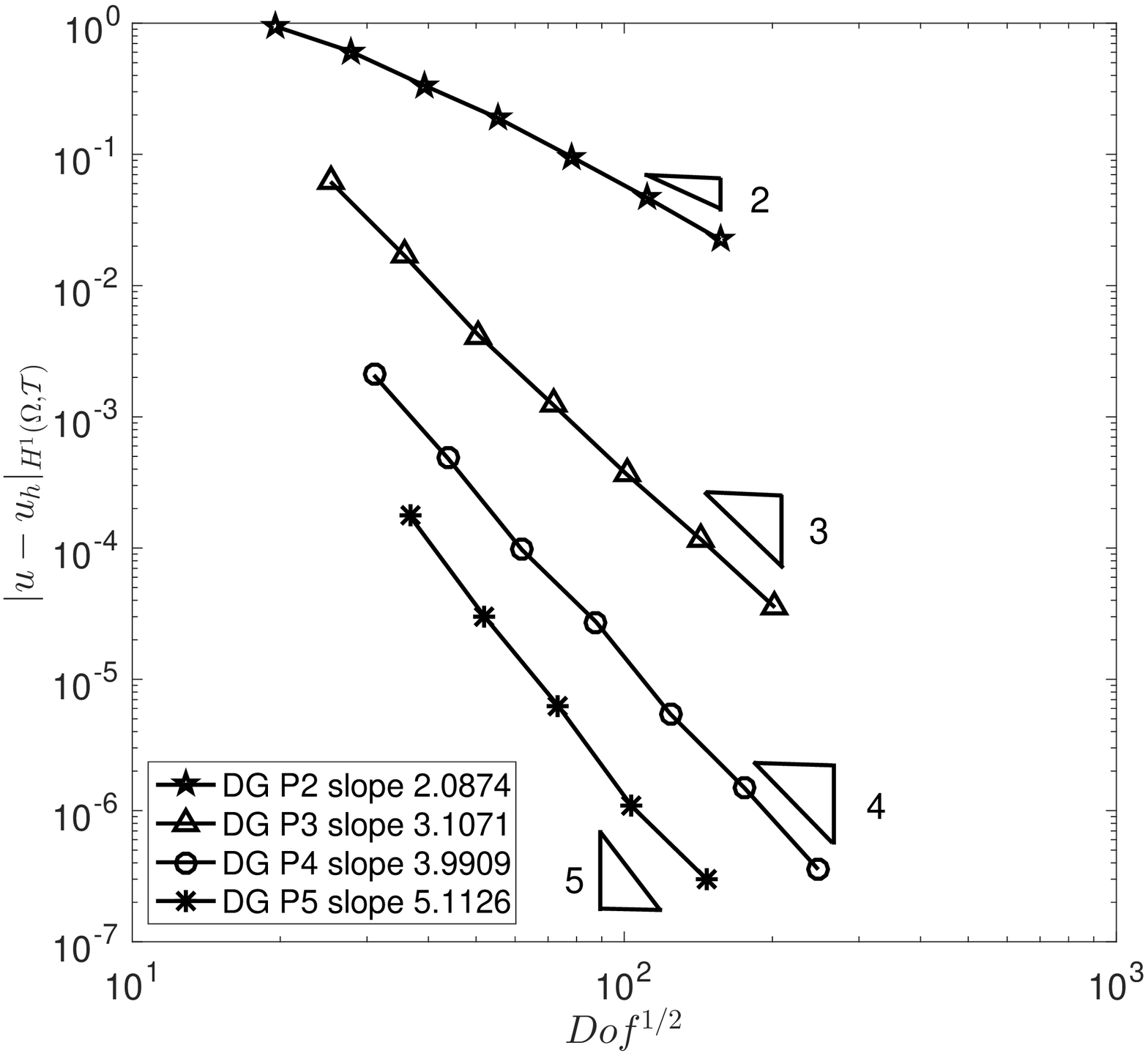} 
%}
\\
%\subcaptionbox{$\norm{u-u_h}{L_2(\Omega)}$}{
\includegraphics[scale=0.38]{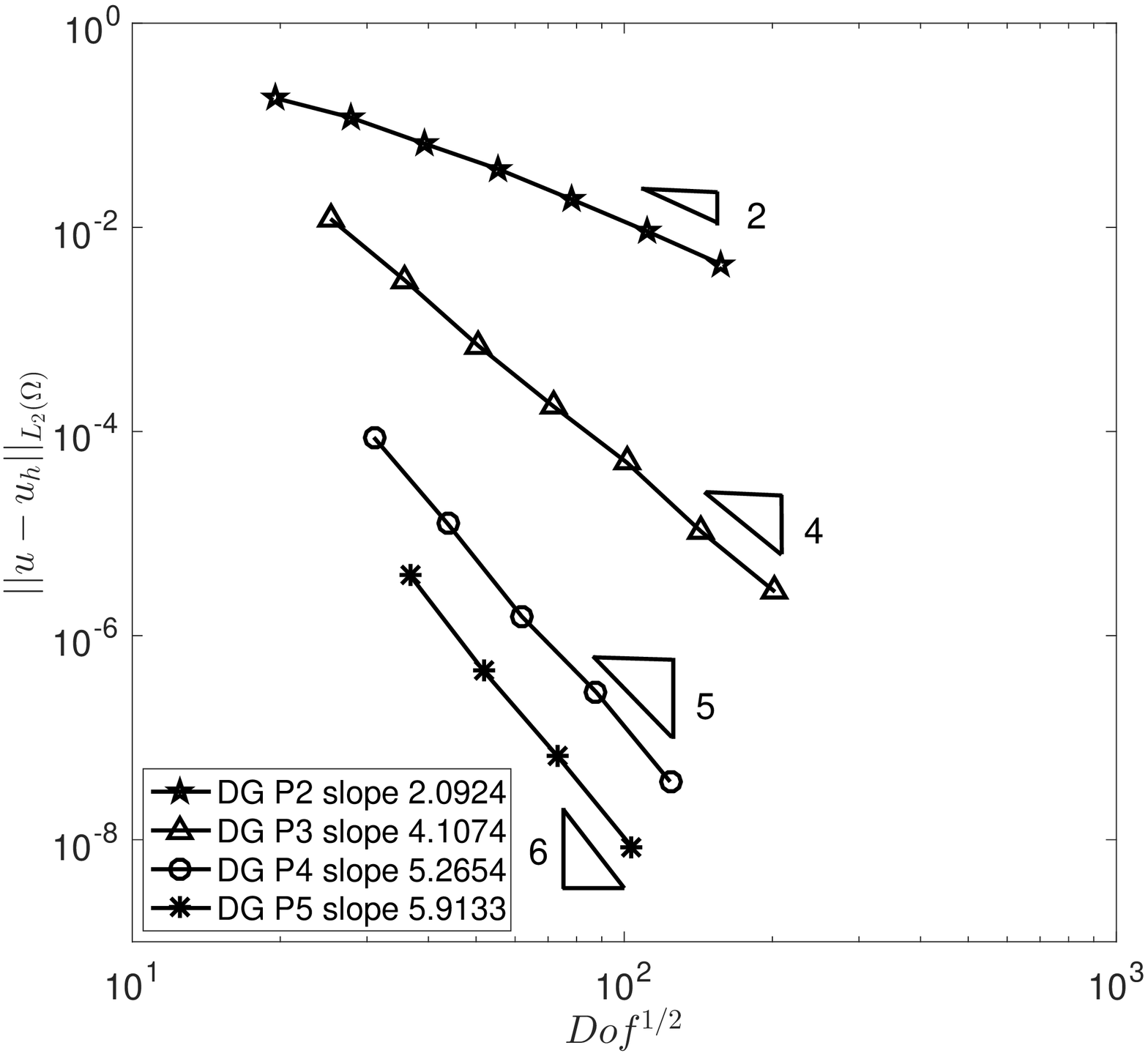} 
%}
\caption{
\label{ex1:h-refine}
 Convergence of the DGFEM under
$h$--refinement for $p=2,3,4,5$. }
\end{figure}

We will investigate the asymptotic behaviour of the errors of the DGFEM on a sequence of finer polygonal meshes for different $p=2,3,4,5$. In Figure \ref{ex1:h-refine}, we present the  DG-norm, broken $H^1$-seminorm and  $L_2$-norm error in the approximation to $u$. First, we observe that $\ndg{u-u_h}$ converges to zero at the optimal rate $\mathcal{O}(h^{p-1})$ for fixed $p$ as the mesh becomes finer, which confirms the error bound in Theorem \ref{sec4:thm:apriori}. Second, we observe that the $|u-u_h|_{H^1(\Omega,\mesh)}$ converges to zero at the optimal rate $\mathcal{O}(h^{p})$ for fixed $p$. Finally, we observe that the $\norm{u-u_h}{L_2(\Omega)}$ converges to zero at the optimal rate $\mathcal{O}(h^{p+1})$ for fixed $p\geq3$. However, for $p=2$, the convergence rate is only $\mathcal{O}(h^{2})$. This sub-optimal convergence result has been observed by other researchers, see \cite{MR2298696,MR2520159}.

Finally, we will investigate the convergence behaviour of DGFEM under $p$-refinement on the fixed polygonal meshes. To this end, in Figure \ref{ex1:p-refine}, we plot the DG-norm error against polynomial degree $p$ in linear-log scale for four different polygonal meshes. For all cases, we observe that the convergence plot is straightly which shows the error decays exponentially.  

\begin{figure}[!ht]
\centering
%\subcaptionbox{$\norm{u-u_h}{L_2(\Omega)}$}{
\includegraphics[scale=0.4]{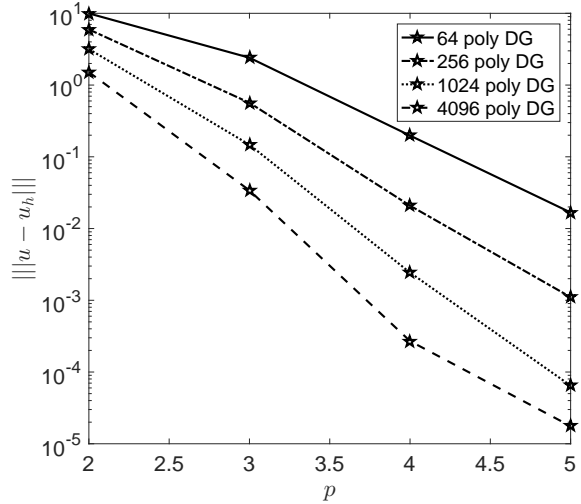} 
%}
\caption{
\label{ex1:p-refine}
 Convergence of the DGFEM under
$p$--refinement. }
\end{figure}

\subsection{Example 2}

In this example, let $\Omega:=(0,1)^2$  and select $f$, $g_{\rm D}$ and $g_{\rm N}$ such that the analytical solution to \eqref{pde} and \eqref{bcs} is give as 
$$
u(x,y) = x(1-x)y(1-y).
$$
The DGFEM solution is computed on general polygonal meshes with a lot of tiny faces, stemming from the agglomeration of a given (fixed) fine mesh consisting of $524,288$ triangular elements. The mesh agglomeration procedure is done in a rough way such that the resulting polygonal meshes generated by this procedure will contain more than $500$ edges {at} the coarsest level. We emphasize that the reason for using the  polygonal meshes with a lot {of} faces is to investigate the stability and accuracy for the proposed DGFEM.  We will use polygonal meshes {consisting} of $32$, $134$, $512$, $2048$, and $8192$ elements in the computation. In Figure \ref{ex2:mesh_figure}, we show some of polygonal meshes used in this example.

\begin{figure}[!h]
\centering
\subcaptionbox{$\mathcal T$ with $32$
polygons.}{
\includegraphics[scale=0.35]{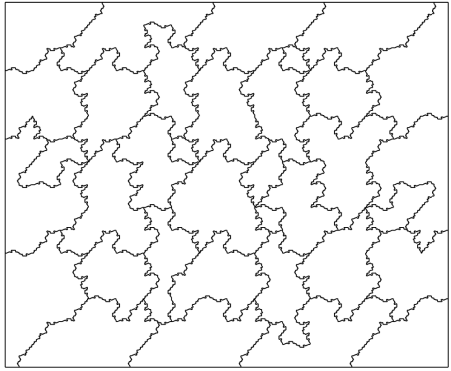}
}
%\hspace{0.2cm}
%\subcaptionbox{$\mathcal T$ with $134$
%polygons.}{
%\includegraphics[scale=0.27]{Ex2_134_A_mesh}
%}
\hspace{0.2cm}
\subcaptionbox{$\mathcal T$ with $512$
polygons.}{
\includegraphics[scale=0.35]{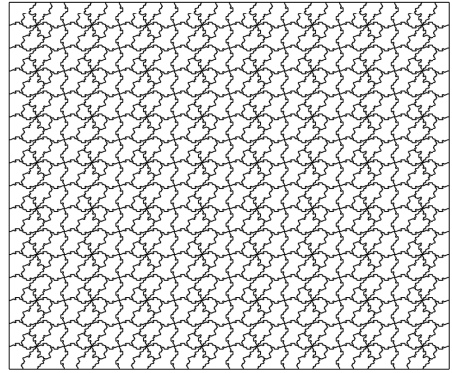} 
}
%\hspace{0.2cm}
%\subcaptionbox{$\mathcal T$ with $2048$
%polygons.}{
%\includegraphics[scale=0.27]{Ex2_2048_A_mesh} 
%}
\caption{
\label{ex2:mesh_figure}
An example of a polytopic mesh $\mathcal T$ with a lot of  elemental faces.}
\end{figure}

\begin{figure}[!ht]
\centering
%\subcaptionbox{$\norm{u-u_h}{L_2(\Omega)}$}{
\includegraphics[scale=0.38]{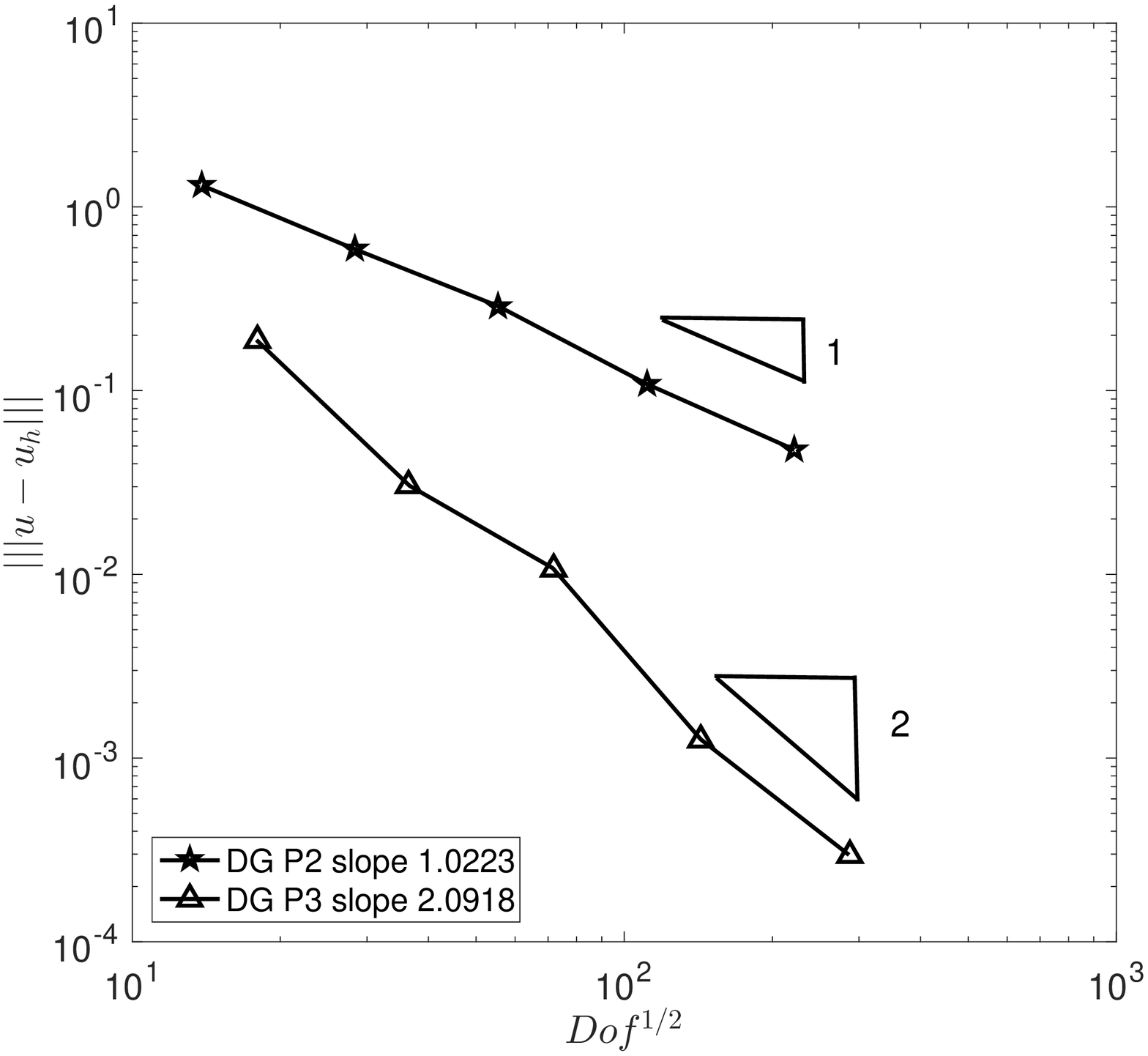} 
%}
\hspace{0cm}
%\subcaptionbox{ $\norm{u-\mathcal{E}(u_h)}{L_2(\Omega)}$}{
\includegraphics[scale=0.38]{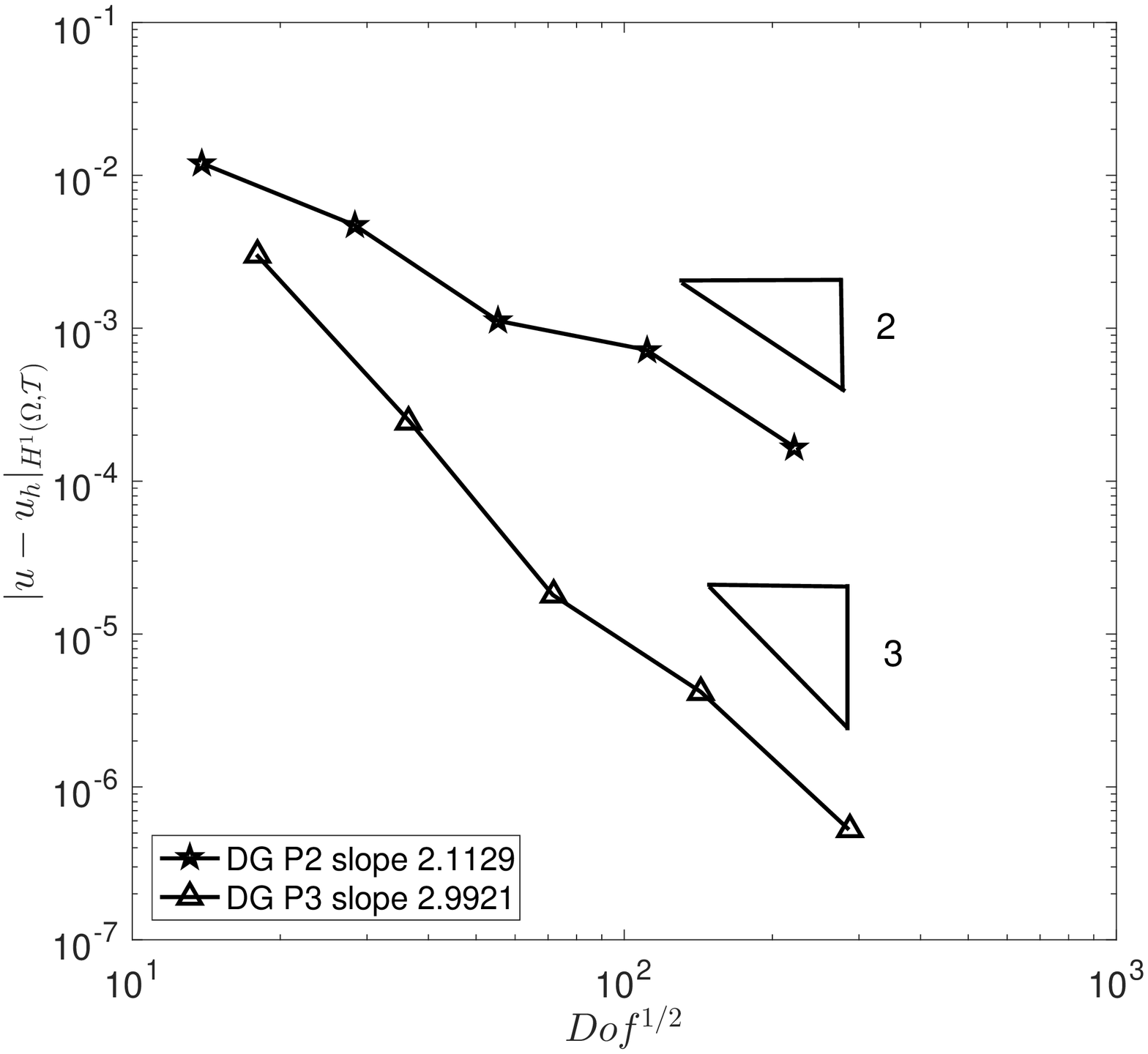} 
%}
\\
%\subcaptionbox{$\norm{u-u_h}{L_2(\Omega)}$}{
\includegraphics[scale=0.38]{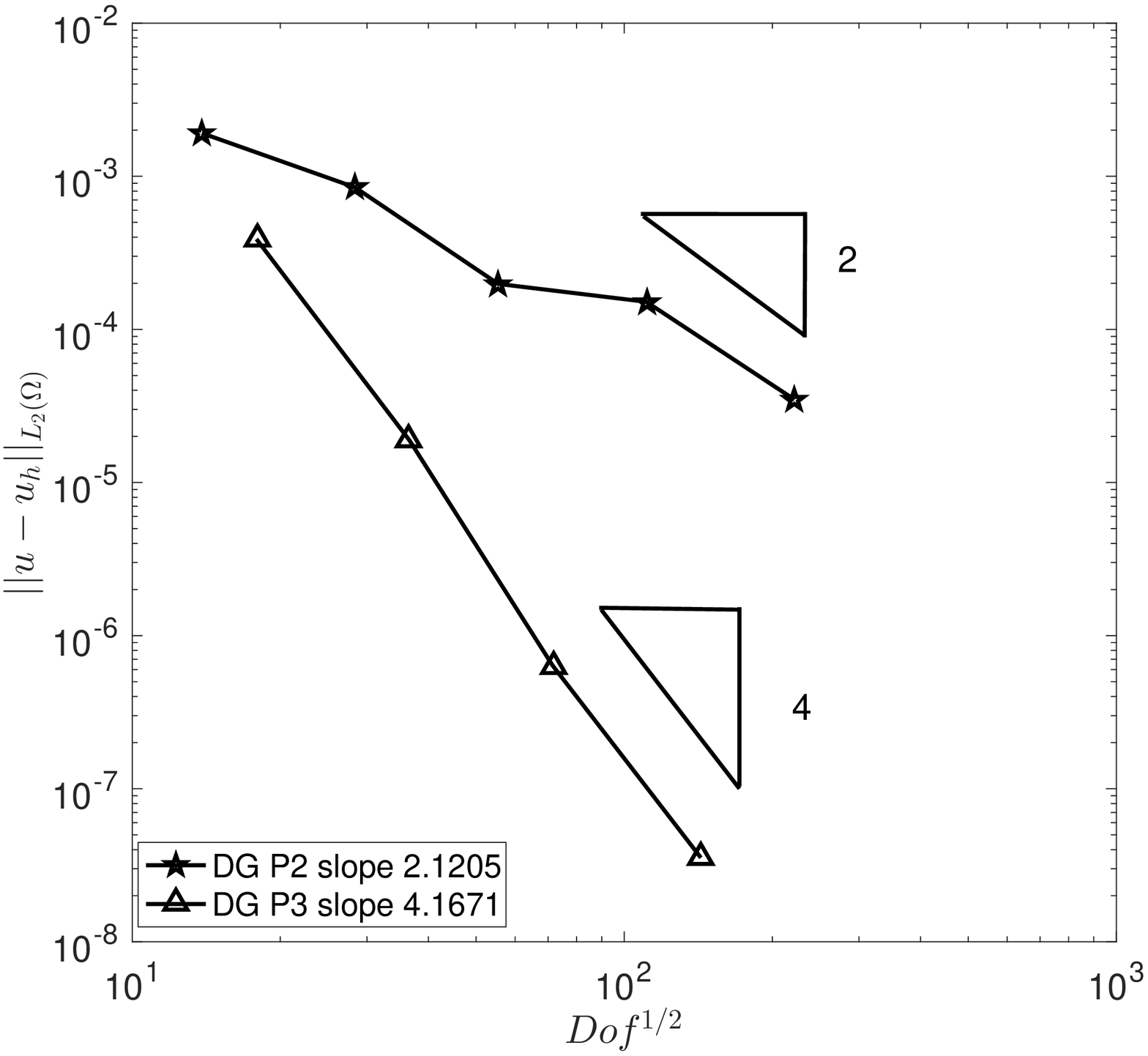} 
%}
\caption{
\label{ex2:h-refine}
 Convergence of the DGFEM under
$h$--refinement for $p=2,3$. }
\end{figure}

We will investigate the asymptotic behaviour of the errors of the DGFEM on a sequence of finer polygonal meshes for different $p=2,3$. In Figure \ref{ex2:h-refine}, we present the  DG-norm, broken $H^1$-seminorm and $L_2$-norm error in the approximation to $u$. First, we observe that the $\ndg{u-u_h}$ converges to zero at the optimal rate $\mathcal{O}(h^{p-1})$ for fixed $p$ as the mesh becomes finer, which confirms the error bound in Theorem \ref{sec5:thm:apriori}. Second, we observe that  $|u-u_h|_{H^1(\Omega,\mesh)}$ converges to zero at the optimal rate $\mathcal{O}(h^{p})$ for fixed $p$. Third, we observe that  $\norm{u-u_h}{L_2(\Omega)}$ converges to zero at the optimal rate $\mathcal{O}(h^{p+1})$ for fixed $p=3$. For $p=2$, the convergence rate is only $\mathcal{O}(h^{2})$ as we expected. Finally, we mention that by choosing the discontinuity penalization functions  $\sigma$ and $\tau$ defined in  Lemma \ref{sec5:lem:coercivity}, there is no numerical instability observed in the computation. The condition number for the proposed DGFEM employing the polygonal meshes with a lot of tiny faces is at the same level of the condition number for the DGFEM employing the polygonal meshes with less than $10$ faces.

\section{Concluding Remarks} \label{conclusion}

We have studied the $hp$-version IP DGFEM for {the} biharmonic boundary value problem, based on employing general computational meshes containing polygonal/polyhedral elements with degenerating $(d-k)$-dimensional faces, $k=1,\dots,d-1$. The key results in this work are that {the} $hp$-version IP-DGFEM is stable on polygonal/polyhedral elements satisfying the assumption that the number of elemental faces is uniformly bounded. Moreover, with {the} help of the new inverse inequality in Lemma \ref{sec5: H1-lemmal_inv_arbitrary}, we also prove that IP-DGFEM employing $\mathcal{P}_p$ basis, $p=2,3$, is stable on polygonal/polyhedral elements with arbitrary number of faces satisfying Assumption \ref{sec5:new_assumption_no_element_faces}.  The numerical examples also confirm the theoretical analysis. 

From the practical point of view, the condition number for DGFEM to solve biharmonic problem is typically very large. The development of   efficient multi-grid solvers for the DGFEM on general polygonal/polyhedral meshes, is left as {a} further challenge.

\section*{Acknowledgements} We wish to express our sincere gratitude to Andrea Cangiani (University of Leicester) and Emmanuil Georgoulis  (University of Leicester \& National Technical University of Athens) for their insightful comments on an earlier version of this work. Z. Dong  acknowledges funding by The Leverhulme Trust (grant no. RPG-2015-306).

\bibliographystyle{siam}
\bibliography{DGpoly_Biharmonic_revision}

\end{document}